\newcommand{\norm}[2][]{\|{#2}\|_{{#1}}}
\newcommand{\dd}{ { \mathrm{d}} }
\DeclareMathOperator*{\Tr}{\mathrm{Tr}}
\DeclareMathOperator*{\essup}{ess\,sup}
\newcommand{\cF}{{ \mathcal F}}
\newcommand{\cD}{{ \mathcal D}}
\newcommand{\HS}{\mathrm{HS}}
\newtheorem{thm}{Theorem} [section]
\newtheorem{lem} [thm] {Lemma}
\newtheorem{prop} [thm] {Proposition}
\theoremstyle {definition}
\numberwithin{equation}{section}
\begin{document}

\title[Euler approximation of the stochastic Allen-Cahn equation]{On
  the backward Euler approximation of the stochastic Allen-Cahn
  equation}

\author[M.~Kov\'acs]{Mih\'aly Kov\'acs}

\address{Department of Mathematics and Statistics,
  University of Otago, P.O.~Box 56, Dune\-din, New Zealand}

\email{mkovacs@maths.otago.ac.nz}

\author[S.~Larsson]{Stig Larsson}

\address{
  Department of Mathematical Sciences,
  Chalmers University of Technology and University of Gothenburg,
  SE--412 96 Gothenburg,
  Sweden}

\email{stig@chalmers.se}

\author[F.~Lindgren]{Fredrik Lindgren}

\address{
  Department of Mathematical Sciences,
  Chalmers University of Technology and University of Gothenburg,
  SE--412 96 Gothenburg,
  Sweden}

\email{fredrik.lindgren@chalmers.se}

\keywords{stochastic partial differential equation, Allen-Cahn
  equation, additive noise, Wiener process, Euler method, pathwise
  convergence, strong convergence, factorization method}

\subjclass[2000]{65M60, 60H15, 60H35, 65C30}
\begin{abstract}
  We consider the stochastic Allen-Cahn equation perturbed by smooth
  additive Gaussian noise in a spatial domain with smooth boundary in
  dimension $d\le 3$, and study the semidiscretization in time of the
  equation by an implicit Euler method. We show that the method
  converges pathwise with a rate $O(\Delta t^{\gamma}) $ for any
  $\gamma<\frac12$. We also prove that the scheme converges uniformly
  in the strong $L^p$-sense but with no rate given.
\end{abstract}
\maketitle

\section{Introduction}
Let $\mathcal{D}\subset \mathbb{R}^d$, $d\le 3$, be a spatial domain with smooth boundary $\partial\cD$
and consider the stochastic partial differential equation written in the abstract It\^o form
\begin{equation}\label{eq:sac}
\dd u+Au\,\dd t+f(u)\,\dd t=\dd W,~t\in(0,T];\quad u(0)=u_0,
\end{equation}
where $\{W(t)\}_{t\ge 0}$ is an $L^2(\cD)$-valued $Q$-Wiener process
on a filtered probability space $(\Omega, \cF, \mathbb{P},
\{\cF_t\}_{t\ge 0})$ with respect to the normal filtration $
\{\cF_t\}_{t\ge 0}$.  We use the notation $H=L^2(\cD)$ with inner
product $\langle\cdot\,,\cdot\rangle$ and induced norm $\|\cdot\|$ and
$V=H^1_0(\cD)$. Moreover, $A\colon V\to V'$ denotes the linear
elliptic operator $Au=-\nabla\cdot(\kappa\nabla u)$ for $u\in V$,
where $\kappa(x)>\kappa_0>0$ is smooth. As usual we consider the
bilinear form $a\colon V\times V\to\mathbb{R}$ defined by $a(u,v) = (
Au,v)$ for $u,v\in V$, and $(\cdot\, ,\cdot)$ denotes the duality pairing
of $V'$ and $V$.  We denote by $\{E(t)\}_{t\ge 0}$ the analytic
semigroup in $H$ generated by the realization of $-A$ in $H$ with
$D(A)=H^2(\mathcal D)\cap H^1_0(\mathcal{D})$.  Finally, $f\colon
\cD_f\subset H\to H$ is given by $(f(u))(x)=F'(u(x))$, where
$F(s)=c(s^2-\beta^2)^2$ ($c>0$) is a double well potential. Note that
$f$ is only locally Lipschitz and does not satisfy a linear growth
condition. It does, however, satisfy a global one-sided Lipschitz
condition, which is a key property for proving uniform moment bounds.

We consider a fully implicit Backward Euler discretization of \eqref{eq:sac}
via the iteration
\begin{equation}\label{eq:be}
u^j-u^{j-1}+\Delta t\, A u^j+\Delta t\, f(u^j)=\Delta W^j,~j=1,2,\dots, N;
\quad u^0=u_0,
\end{equation}
where $\Delta t>0$. Note that this scheme is implicit
also in the drift term $f$. In return, the scheme preserves key
qualitative aspects of the solution of \eqref{eq:sac} such as moment
bounds.

The following two results constitute the main results of the
paper. For notation we refer to Section \ref{sec:prelim}. Let $N\in \mathbb{N}$, $T=N\Delta t$ and
$t_n=n\Delta t$, $n=1,2,\dots, N$. In
Theorem~\ref{thm:pwc} (pathwise convergence) we show that if
$\|A^{\frac12+\varepsilon} Q^{\frac12}\|_{\HS}<\infty$ for some small
$\varepsilon>0$, $\mathbb{E}\|u_0\|_1^2<\infty$, and $0\le \gamma
<\frac12$, then there are finite random variables $K\ge 0$ and $\Delta t_0>0$ such
that, almost surely,
$$
\sup_{t_n\in [0,T]}\|u(t_n)-u^n\|\le K \Delta t^\gamma,\quad \Delta t\le \Delta t_0.
$$
In Theorem~\ref{thm:strc} (strong convergence) we prove
that if $p\ge 1$ and $\mathbb{E}\|u_0\|_1^{2p}<\infty$, then
$$
\lim_{\Delta t\to 0}\mathbb{E}\sup_{t_{n}\in [0,T]}\|u(t_n)-u^n\|^p= 0.
$$

Since the method of proof uses a priori bounds obtained via energy
arguments together with a pathwise error analysis based on the mild
formulation of the equation, a strong rate cannot be obtained via this
line of argument. We would like to point out that the strong
convergence of the Backward Euler scheme is somewhat surprising given
the superlinearly growing character of $f$, see also the discussion in
\cite{Jentzen1}. We do not know of any results where strong
convergence results, with or without rates, are obtained for a
time-discretization scheme for an SPDE with non-global Lipschitz
nonlinearity without linear growth (for SODEs, we refer to \cite{HMS}).  There are many
results on pathwise and strong convergence of the
Backward Euler scheme (usually explicit in the drift term $f$) under
global Lipschitz conditions (or local Lipschitz with linear growth conditions), see, for example,
\cite{CoxVN,GyM,Haus1,Haus2} and the references therein. For
non-global Lipschitz nonlinearities the relatively recent method
developed in \cite{Jentzen1} uses a scheme which is based on the mild
formulation of the SPDE. This is also employed, for example, in
\cite{Blomker_et_al}. In that setting pathwise error estimates are
derived but strong convergence results would be rather difficult to
obtain as the method loses the information about the one-sided
Lipschitz condition on $f$, which can only be exploited in a
variational or weak solution approach. We also mention \cite{P2001}
where convergence in probability is obtained without global Lipschitz
conditions for the Backward Euler scheme.

Spatial pathwise convergence results for certain semilinear SPDEs with non-global Lipschitz $f$ without linear growth are obtained in
\cite{Blomker_Jentzen,Blomker_et_al}, both using spectral Galerkin
approximation. Concerning spatial strong convergence we only know of
\cite{Liu-1} and \cite{SS}, both with rates, based on a spectral Galerkin method and a
finite difference method, respectively. In the latter two papers the authors use
energy type arguments, and hence they can fully exploit the one-sided
Lipschitz character of $f$.

Finally, we would like to note that \eqref{eq:be} is also referred to
as Rothe's method. Since we can prove both pathwise and strong
convergence, one can set up a nonlinear wavelet-based adaptive
algorithm to solve the elliptic equation in each time-step and obtain
a implementable scheme, which converges both path-wise and strongly
in a similar way as in \cite{Dahlkeetal2} and \cite{KLU}.

The paper is organized as follows. In Section \ref{sec:prelim} we
collect frequently used results from infinite dimensional analysis and
introduce some notation. In Section \ref{sec:reg} we discuss the
spatial Sobolev regularity of the solution and the H\"older regularity
in time. In Section \ref{sec:apriori} we prove maximal type $p$-th
moment bounds on $u^n$ (Propositions \ref{prop:ul} and
\ref{prop:ulp}), which are in fact the exact analogues of the ones on
$u(t)$ (Proposition \ref{prop:eb}). Here we highlight that for $p=2$
the bounds only grow linearly in $T$, while for $p>2$ exponentially
because of a Gronwall argument. In Section \ref{sec:mr} we state and
prove the main results of the paper on the convergence of
\eqref{eq:be}. An important part of the proof is a maximal type error
estimate for the linear part (Proposition \ref{prop:wa}), where we
employ a discrete version of the celebrated factorization method.

\section{Preliminaries}\label{sec:prelim}
Throughout the paper we will use various norms for linear operators on
a Hilbert space. We denote by $\mathcal{L}(H)$, the space of bounded
linear operators on $H$ with the usual operator norm denoted by
$\|\cdot\|$. If for a positive semidefinite operator $T\colon H\to H$, the
sum
$$
\Tr T:=\sum_{k=1}^\infty\langle Te_k,e_k\rangle<\infty
$$
for an orthonormal basis (ONB) $\{e_k\}_{k\in \mathbb{N}}$ of $H$,
then we say that $T$ is trace-class. In this case $\Tr T$, the trace
of $T$, is independent of the choice of the ONB. If for an operator
$T\colon H\to H$, the sum
$$
\|T\|_{\HS}^2:=\sum_{k=1}^\infty\|Te_k\|^2<\infty
$$
for an ONB $\{e_k\}_{k\in \mathbb{N}}$ of $H$, then we say that $T$ is
Hilbert-Schmidt and call $\|T\|_{\HS}$ the Hilbert-Schmidt norm of
$T$.  The Hilbert-Schmidt norm of $T$ is independent of the choice of
the ONB. We have the following well-known properties of the trace and
Hilbert-Schmidt norms, see, for example, \cite[Appendix C]{DPZ},
\begin{align}
\label{eq:hs}
\|T\|&\le \|T\|_{\HS},\quad \|TS\|_{\HS}\le \|T\|_{\HS}\|S\|,
\quad\|ST\|_{\HS}\le \|S\|\,\|T\|_{\HS},
\\
\label{eq:tr}
\Tr Q&=\|Q^{\frac12}\|_{\HS}^2=\|T\|^2_{\HS}=\|T^*\|_{\HS}^2,
\quad\text{ if $Q=TT^*$.}
\end{align}

Next, we introduce fractional order spaces and norms.  It is well
known that our assumptions on $A$ and on the spatial domain
$\mathcal{D}$ imply the existence of a sequence of nondecreasing
positive real numbers $\{\lambda_k\}_{k\geq 1}$ and an orthonormal
basis $\{e_k\}_{k\geq 1}$ of $H$ such that
\begin{equation*}\label{eq:spectral}
Ae_k = \lambda_k e_k, \quad \lim_{k\rightarrow +\infty} \lambda_k = +\infty.
\end{equation*}
Using the spectral functional calculus for $A$ we introduce the fractional powers $A^s$, $s
\in \mathbb{R}$, of $A$ as
\begin{equation*}\label{eq:fp}
A^s v=\sum_{k=1}^{\infty}\lambda_k^s(v,e_k)e_k,\quad
D(A^s)=\Big\{v\in H:\|A^sv\|^2=\sum_{k=1}^{\infty}\lambda_k^{2s}(v,e_k)^2<\infty\Big\}
\end{equation*}
and spaces $\dot{H}^\beta=D(A^{\beta/2})$ with inner product $\langle
u , v\rangle_{\beta}=\langle A^{\frac{\beta}{2}}u ,
A^{\frac{\beta}{2}}v\rangle$ and induced norms
$\norm[\beta]{v}=\norm{A^{\beta/2} v}$. It is well-known that if $0\le
\beta < 1/2$, then $\dot{H}^\beta=H^\beta$ and if $1/2<\beta\le 2$,
then $\dot{H}^\beta=\{u\in H^\beta:u|_{\partial \mathcal{D}}=0\}$,
where $H^\beta$ denotes the standard Sobolev space of order $\beta$.

We recall the fact that the semigroup $\{E(t)\}_{t\ge 0}$ generated
by $-A$ is analytic and therefore it follows from \cite[Theorem
6.13]{Pazy} that for $t> s > 0$,
\begin{align}
&\|A^{\beta}E(t)v\|\le Ct^{-\beta}\|v\|,\quad\beta\ge 0,\label{eq:anal0}\\
\label{eq:anal}
&\|A^{\beta}(E(t)-E(s))v\|
\le Cs^{-(\beta+\gamma)}|t-s|^{\gamma+\rho}\|A^{\rho}v\|,
~\beta \ge 0,~0\le \gamma+\rho \le 1.
\end{align}

We will also use It\^o's Isometry and the Burkholder-Davies-Gundy
inequality for It\^o-integrals of the form $\int_0^t\langle
\eta(s),\dd \tilde{W}(s)\rangle$, where $\tilde{W}$ is a
$\tilde{Q}$-Wiener process. For this kind of integral, It\^o's
Isometry, \cite[Proposition 4.5]{DPZ} reads as
\begin{equation}\label{eq:rito}
\mathbb{E}\left|\int_0^t\langle \eta(s),\dd
  \tilde{W}(s)\rangle\right|^2
=\mathbb{E}\int_0^t\|\tilde{Q}^{\frac12}\eta(s)\|^2\,\dd s,
\end{equation}
and the Burkholder-Davies-Gundy inequality, \cite[Lemma 7.2]{DPZ}, takes the form
\begin{equation}\label{eq:rbdg}
\mathbb{E}\sup_{t\in [0,t_0]}
\left|\int_0^t\langle \eta(s),\dd \tilde{W}(s)\rangle\right|^p
\le C_p \mathbb{E}\left(\int_0^{t_0}\|\tilde{Q}^{\frac12}\eta(s)\|^2\,\dd s\right)^{\frac{p}{2}}, ~p\ge 2.
\end{equation}

Finally, if $Y$ is an $H$-valued Gaussian random variable with
covariance operator $\tilde{Q}$, then, by \cite[Corollary 2.17]{DPZ},
we can bound its $p$-th moments via its covariance operator as
\begin{equation}\label{eq:es1a}
\mathbb{E}\|Y\|^{2p}\le C_p (\mathbb{E}\|Y\|^2)^{p}
=C_p (\Tr \tilde{Q})^{p}=\|\tilde{Q}^{\frac12}\|_{\HS}^{2p}.
\end{equation}

\section{Regularity of the solution}\label{sec:reg}

The following existence, uniqueness, and regularity result can
essentially be found in \cite[Example 3.5]{Liu} for
$\mathcal{D}=[0,1]$, where it is stated with $\essup$ instead of
$\sup$ for the second term in \eqref{eq:esup}. It is remarked there,
\cite[Remark 3.4]{Liu}, that the result can be proved in higher
dimensions by using \cite[Example 3.2]{LR}, where domains with smooth
boundary are considered. Finally, by \cite[Theorem 1.1]{Liu0}, the
$\essup$ can be replaced by $\sup$ in the second term as stated below
in \eqref{eq:esup}. We also  note that for the equation considered in
this paper, this result can be obtained by using the deterministic
Ljapunov functional $J(u)=\|\nabla u\|_1^2+\int_{{\mathcal
    D}}F(u)\,\dd x$ and It\^o's formula in a way analogous to
\cite[Theorem 3.1 and Corollary 3.2]{KML}, see also \cite{Prato}.

For the definition of \emph{variational solution} we refer to
\cite[Definition 4.2.1]{PR}.

\begin{prop}\label{prop:eb}
If $\|A^{\frac12}Q^{\frac12}\|_{\HS}<\infty$ and
$\mathbb{E}\|u_0\|_1^p<\infty$ for some $p\ge 2$, then there is a
unique variational solution $u$ of \eqref{eq:sac}. Furthermore,
there is $C_T>0$ such that
\begin{equation}\label{eq:esup}
\mathbb{E}\sup_{t\in [0,T]}\|u(t)\|^p+\mathbb{E}\sup_{t\in [0,T]}\|u(t)\|_{1}^p\le C_T.
\end{equation}
\end{prop}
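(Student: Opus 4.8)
The plan is to establish existence, uniqueness, and the moment bounds \eqref{eq:esup} by combining the general variational theory of \cite{PR} with an It\^o-formula energy argument applied to the Lyapunov functional. The key structural facts I would exploit are that $A$ is coercive and self-adjoint, and that $f$ satisfies a one-sided Lipschitz (monotonicity-type) condition even though it grows superlinearly and fails a linear growth bound. Concretely, from $F(s)=c(s^2-\beta^2)^2$ one computes $f(s)=F'(s)=4c\,s(s^2-\beta^2)$, and the decisive coercivity estimate is that $\langle f(u)-f(v),u-v\rangle \ge -L\|u-v\|^2$ for some $L\ge0$, equivalently $f(s)s \ge c_1 s^4 - c_2$ pointwise. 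This lets the nonlinearity be absorbed as a good (dissipative) term rather than fought as a growth obstruction.

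First I would verify the hypotheses of the variational existence/uniqueness theorem (\cite[Definition 4.2.1]{PR} and the associated well-posedness result): the operator $u\mapsto -Au-f(u)$ should be hemicontinuous, satisfy the weak monotonicity inequality (using coercivity of $A$ and the one-sided Lipschitz bound on $f$), a coercivity estimate of the form $(-Au-f(u),u)\le -c\|u\|_V^2 + C$ after incorporating the quartic growth of $F$, and an appropriate growth/boundedness condition in the relevant Gelfand triple. Because $d\le 3$, the Sobolev embedding $V=H^1_0(\cD)\hookrightarrow L^6(\cD)$ controls the cubic nonlinearity, so the natural triple is $V\hookrightarrow H\hookrightarrow V'$ with the quartic potential handled in $L^4$. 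Under the stated assumptions $\|A^{\frac12}Q^{\frac12}\|_{\HS}<\infty$ (equivalently $\Tr(A^{1/2}Q A^{1/2})<\infty$ by \eqref{eq:tr}) and $\mathbb{E}\|u_0\|_1^p<\infty$, this yields a unique variational solution.

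Next, for the moment bounds \eqref{eq:esup}, the plan is to apply It\^o's formula to $J(u(t))=\|\nabla u(t)\|^2+\int_{\cD}F(u(t))\,\dd x$ (the natural energy/Lyapunov functional for Allen--Cahn), following the strategy of \cite[Theorem 3.1 and Corollary 3.2]{KML}. The drift terms from $Au$ and $f(u)$ combine with $\langle \nabla\cdot, \nabla\cdot\rangle$ to produce the dissipative quantity $-2\|Au+f(u)\|^2$ (or a coercive majorant thereof), which is nonpositive, while the It\^o correction contributes a trace term controlled precisely by $\|A^{\frac12}Q^{\frac12}\|_{\HS}^2$ — this is why that hypothesis, rather than merely $\Tr Q<\infty$, is needed, since $J$ involves $\|\nabla u\|^2$ and differentiating it twice brings down one power of $A$. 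Taking suprema in time and $p$-th powers, and estimating the resulting stochastic integral by the Burkholder--Davis--Gundy inequality \eqref{eq:rbdg}, reduces everything to a Gronwall-type inequality; since $J$ dominates both $\|u\|^p$ and $\|u\|_1^p$ up to constants, this delivers \eqref{eq:esup}.

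The main obstacle I anticipate is making the It\^o-formula computation rigorous for this unbounded, non-Lipschitz functional: $J$ is not globally defined or smooth on $H$, so one cannot apply the infinite-dimensional It\^o formula directly. The standard remedy is to work with a Galerkin/Yosida approximation or a finite-dimensional spectral truncation $u_n=P_n u$, derive the energy identity there where everything is smooth and finite, obtain uniform-in-$n$ bounds, and then pass to the limit using the a priori estimates together with lower-semicontinuity of the norms and Fatou's lemma to recover \eqref{eq:esup} for $u$ itself. Care is needed to control the quartic term $\int_\cD F(u)\,\dd x$ uniformly and to ensure the BDG estimate closes despite the superlinear $f$; the one-sided Lipschitz structure is exactly what prevents the nonlinear drift from destroying the estimate, so I would be attentive that every bound uses $f$ only through $\langle f(u),u\rangle$ or $\langle f(u)-f(v),u-v\rangle$ and never through a (nonexistent) linear growth bound.
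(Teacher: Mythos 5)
The paper does not actually prove Proposition~\ref{prop:eb}: it imports existence, uniqueness and the bound \eqref{eq:esup} from the literature (\cite[Example 3.5]{Liu} together with \cite[Example 3.2]{LR} for smooth domains in $d\le 3$, and \cite[Theorem 1.1]{Liu0} to upgrade $\essup$ to $\sup$), and merely remarks that an alternative self-contained proof goes through the Lyapunov functional $J(u)=\|\nabla u\|^2+\int_{\cD}F(u)\,\dd x$ and It\^o's formula as in \cite[Theorem 3.1 and Corollary 3.2]{KML}. Your plan is precisely that remarked alternative, so in spirit you are aligned with the paper; what your route buys is a self-contained argument that makes visible exactly where the hypothesis $\|A^{\frac12}Q^{\frac12}\|_{\HS}<\infty$ enters (the It\^o correction of the gradient part of $J$ is $\Tr(A^{\frac12}Q A^{\frac12})$), whereas the paper's route buys brevity and delegates the delicate well-posedness issues to the locally monotone framework of \cite{LR,Liu}.

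Two points in your write-up need repair. First, the parenthetical ``equivalently'' is wrong: the one-sided Lipschitz condition $\langle f(u)-f(v),u-v\rangle\ge -L\|u-v\|^2$ and the dissipativity bound $f(s)s\ge c_1s^4-c_2$ are distinct properties (both do hold for this cubic $f$, but the former is what gives uniqueness/monotonicity and the latter coercivity; neither implies the other). Second, and more substantively, you cannot simply ``verify the hypotheses of the variational existence/uniqueness theorem'' in the classical monotone-operator form of \cite{PR}: for the triple $H^1_0\hookrightarrow L^2\hookrightarrow H^{-1}$ in $d=3$ the cubic drift violates the boundedness hypothesis, since $\|f(u)\|_{V'}\lesssim \|u\|_{L^{18/5}}^3\lesssim \|u\|_{V}^3$ grows cubically rather than linearly in $\|u\|_V$. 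This is exactly why the paper invokes the \emph{locally} monotone framework with a Lyapunov condition (\cite{LR,Liu}) rather than \cite[Theorem 4.2.4]{PR}; your proof plan must either do the same or carry out the Galerkin construction by hand, closing the a priori estimates with the energy identity for $J$ before passing to the limit. With those two corrections the outline is sound: the It\^o/Galerkin/Fatou strategy, the BDG estimate \eqref{eq:rbdg} for the martingale term, and the observation that $J(u)+C\gtrsim \|u\|_1^2+\|u\|^2$ do deliver \eqref{eq:esup}.
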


In this case, $u$ is also a mild solution, see
\cite[Proposition F.0.5 and Remark F.0.6]{PR}; that is, $u$ satisfies
the integral equation
\begin{equation}\label{eq:mild}
u(t)=E(t)u_0+\int_0^tE(t-s)f(u(s))\,\dd s+W_A(t),\quad t\in[0,T],
\end{equation}
almost surely, where the stochastic convolution $W_A$ is defined by
$$
W_A(t)=\int_0^tE(t-s)\,\dd W(s).
$$
This ultimately follows from the fact that the noise is additive trace class and that, by Sobolev's inequality,
\begin{equation}\label{eq:lg}
\|f(u(t))\|\le C (\|u(t)\|+\|u(t)\|_{L^6}^3)
\le C (\|u(t)\|+\|u(t)\|_{1}^3),
\end{equation}
which is bounded almost surely for $t\in [0,T]$ by Proposition \ref{prop:eb}. Note that here, in order to be able to use Sobolev's
inequality, it is crucial that $d\le 3$ and that the nonlinearity
$f$ is at most cubic.

Next we look at the pathwise H\"older regularity of $u$. First we
consider the stochastic convolution $W_A$.
\begin{lem}\label{lem:wlip}
Let $0<\beta\le 1$,
$\|A^{\frac{\beta-1}{2}}Q^{\frac12}\|_{\HS}<\infty$ and $p>
\frac{2}{\beta}$. Then, there is a nonnegative real random variable
$K$ with $\mathbb{E}K^p<\infty$ such that, almost surely,
$$
\sup_{t\neq s\in [0,T]}\frac{\|W_A(t)-W_A(s)\|}{|t-s|^{\gamma}}\le K
\quad\text{for $0\le\gamma<\frac{\frac{\beta p}{2}-1}{p}$.}
$$
\end{lem}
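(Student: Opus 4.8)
The plan is to combine a second-moment estimate on the increments $W_A(t)-W_A(s)$ with Gaussianity and the Kolmogorov continuity theorem. Since $W_A$ is a stochastic convolution of a deterministic operator-valued integrand against $W$, each increment $W_A(t)-W_A(s)$ is a centred $H$-valued Gaussian random variable, so once I control its second moment I control all its moments through \eqref{eq:es1a} and the equivalence of Gaussian moments.

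First, for $0\le s<t\le T$ I would split the increment into a ``new noise'' part and an ``old noise'' part,
$$W_A(t)-W_A(s)=\int_s^t E(t-r)\,\dd W(r)+\int_0^s\bigl(E(t-r)-E(s-r)\bigr)\,\dd W(r),$$
and apply It\^o's isometry \eqref{eq:rito} to each term, which turns the second moments into time integrals of Hilbert--Schmidt norms. The role of the hypothesis $\|A^{(\beta-1)/2}Q^{\frac12}\|_{\HS}<\infty$ is that I can peel off $Q^{\frac12}$ by writing $E(t-r)Q^{\frac12}=\bigl(A^{(1-\beta)/2}E(t-r)\bigr)A^{(\beta-1)/2}Q^{\frac12}$ and using submultiplicativity \eqref{eq:hs}, so that only operator norms of $A^{(1-\beta)/2}E(\cdot)$ and of the smoothed semigroup difference remain, multiplied by the finite factor $\|A^{(\beta-1)/2}Q^{\frac12}\|_{\HS}$.

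For the first integral the analyticity bound \eqref{eq:anal0} gives $\|A^{(1-\beta)/2}E(t-r)\|\le C(t-r)^{-(1-\beta)/2}$, and since $\beta>0$ the integral $\int_s^t(t-r)^{-(1-\beta)}\,\dd r$ converges and is $O\bigl((t-s)^\beta\bigr)$. For the second integral I would use the smoothing estimate \eqref{eq:anal} with $\rho=0$ to obtain $\|A^{(1-\beta)/2}(E(t-r)-E(s-r))\|\le C(s-r)^{-((1-\beta)/2+\gamma')}|t-s|^{\gamma'}$ for any $\gamma'<\beta/2$; the exponent on $(s-r)$ is then strictly less than $1$, so $\int_0^s(s-r)^{-((1-\beta)+2\gamma')}\,\dd r$ converges uniformly in $s\in[0,T]$ and contributes $O\bigl(|t-s|^{2\gamma'}\bigr)$. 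Collecting the two terms and using $2\gamma'<\beta$ yields $\mathbb{E}\|W_A(t)-W_A(s)\|^2\le C|t-s|^{2\alpha}$, uniformly on $[0,T]$, for every $\alpha<\beta/2$.

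Finally, I would upgrade this to $\mathbb{E}\|W_A(t)-W_A(s)\|^p\le C|t-s|^{\alpha p}$ via \eqref{eq:es1a}, and then invoke the Kolmogorov continuity theorem in its quantitative form: when $\alpha p>1$, the H\"older seminorm of (the continuous version of) $W_A$ with exponent $\gamma$ has finite $p$-th moment for every $\gamma<\alpha-\frac1p$. The constraint $\tfrac1p<\alpha<\tfrac\beta2$ is solvable precisely because $p>\tfrac2\beta$, and letting $\alpha\uparrow\beta/2$ covers all $\gamma<\frac{\beta p/2-1}{p}$, producing the random variable $K$ with $\mathbb{E}K^p<\infty$. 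I expect the main obstacle to be the second term: selecting the parameters in \eqref{eq:anal} so that the singularity at $r=s$ stays integrable while still extracting a power of $|t-s|$ arbitrarily close to $\beta$, since this balance is exactly what lets $\gamma$ reach the stated endpoint.
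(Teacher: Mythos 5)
Your proposal is correct and follows essentially the same route as the paper: the same split of $W_A(t)-W_A(s)$ into the integrals over $[s,t]$ and $[0,s]$, Gaussianity to pass from second to $p$-th moments, and Kolmogorov's criterion with the constraint $\frac1p<\frac\beta2$ coming from $p>\frac2\beta$. The only difference is that you carry out explicitly the semigroup estimates (via \eqref{eq:anal0} and \eqref{eq:anal}, losing an arbitrarily small amount in the exponent, which is harmless since the H\"older range is open) that the paper instead cites from the proof of \cite[Theorem 4.2]{KLU}.
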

\begin{proof}
Let $t> s\ge 0$. Note that the stochastic integrals below are
Gaussian random variables and hence we can use \eqref{eq:es1a} to
bound their $p$-th moments. Therefore,
\begin{equation*}
\begin{split}
\mathbb{E}\|W_A(t)-W_A(s)\|^p
& \le C_p\mathbb{E}\left\|\int_s^t E(t-\sigma)\, \dd W(\sigma)\right\|^p
\\ &\quad +C_p\mathbb{E}\left\|\int_0^s E(t-\sigma)-E(s-\sigma)\, \dd W(\sigma)\right\|^p\\
& \le C_p\left(\int_s^t \|E(t-\sigma)Q^{\frac12}\|^2_{\HS}\, \dd \sigma\right)^{\frac{p}{2}}
\\ &\quad +C_p\left( \int_0^s \|E(t-\sigma)-E(s-\sigma)Q^{\frac12}\|_{\HS}^2\, \dd\sigma\right)^{\frac{p}{2}}\le C|t-s|^{\frac{\beta p}{2}},
\end{split}
\end{equation*}
where the last inequality is shown in the proof of \cite[Theorem
4.2]{KLU}. Then the statement follows from Kolmogorov's criterion,
see, for example, \cite[Theorem 1.4.1]{Kunita}.
\end{proof}

With the above preparations, we now prove the H\"older continuity of
$u$. Note that the result is suboptimal compared to the corresponding
result for $W_A$ in Lemma~\ref{lem:wlip}, which requires only
$\beta=1$ to get the same H\"older exponent, while here we assume
$\beta=2$.  This is a consequence of the fact that we use the mild
formulation here and hence cannot exploit the one-sided
Lipschitz condition on $f$ but only its cubic growth.

\begin{prop}\label{propo:ulip}
  Let $\|A^{\frac{1}{2}}Q^{\frac12}\|_{\HS}<\infty$,
  $\mathbb{E}\|u_0\|_1^2<\infty$ and $T>0$. Then, for all
  $\gamma\in[0,\frac12)$, there is a finite nonnegative random
  variable $K$ such that, almost surely,
$$
\sup_{t\neq s\in [0,T]}\frac{\|u(t)-u(s)\|}{|t-s|^{\gamma}}\le K.
$$
\end{prop}
\begin{proof}
Let $T>0$,  $0\le s< t\le T$, and $0\le \gamma<\frac{1}{2}$.
We use the mild formulation \eqref{eq:mild} to represent $u(t)-u(s)$ as follows:
\begin{align*}
u(t)-u(s)&=(E(t)-E(s))u_0
+\int_s^tE(t-r)f(u(r))\,\dd r\\
& \quad +\int_0^s(E(t-r)-E(s-r)f(u(r))\,\dd r
+W_A(t)-W_A(s).
\end{align*}
The estimate in \eqref{eq:anal}, with $\beta=\gamma=0$ and
$\rho=\frac12$, implies that $\|(E(t)-E(s))u_0\|\le
C|t-s|^{\frac12}\|u_0\|_1$. The second term can be bounded, using
 Proposition \ref{prop:eb} and \eqref{eq:lg},
\begin{align*}
\left\|\int_s^tE(t-r)f(u(r))\right\|&\le\int_s^t\|E(t-r)\|\,\|f(u(r))\|\,\dd r\\
&\le C |t-s| \sup_{r\in [0,T]}(\|u(r)\|+\|u(r)\|_1^3)\le K |t-s|.
\end{align*}

In a similar fashion, using this time \eqref{eq:anal} with
$\beta=\rho=0$ and $\frac12\le\gamma<1$,
\begin{align*}
&\left\|\int_0^s(E(t-r)-E(s-r))f(u(r))\,\dd r\right\|\\
&\qquad\le \int_0^s\|(E(t-r)-E(s-r))\|\,\dd r\sup_{r\in [0,T]}(\|u(r)\|+\|u(r)\|_1^3)\\
&\qquad\le K |t-s|^\gamma \int_0^sr^{-\gamma}\dd r\le KT^{1-\gamma}|t-s|^{\gamma}.
\end{align*}

Finally, we note that
$\|Q^{\frac12}\|_{\HS}\le C\|A^{\frac{1}{2}}Q^{\frac12}\|_{\HS}<\infty$ by \eqref{eq:hs} as $A^{-\frac12}\in
\mathcal{L}(H)$, so that we can use Lemma \ref{lem:wlip} with
$\beta=1$ to conclude the proof.
\end{proof}

\section{A priori moment bounds}\label{sec:apriori}

Our first result bounds the second moment of the Euler iterates in
\eqref{eq:be}. The proof uses a kind of bootstrapping argument and as a
result we avoid Gronwall's lemma. Therefore, we are able
to obtain bounds that only grow linearly with $T$ instead of
exponentially. Since these bounds will be used in the Gronwall step in
the pathwise convergence analysis, the constants appearing there will
grow exponentially with time instead of double-exponentially.  We have
to use test functions in the energy arguments below that are different
from the ones used in the deterministic setting, for example in
\cite{Stig}, because of the presence of a non-differentiable right
hand side. This ultimately forces the choice of a scheme implicit also
in the drift in order to be able to use the one-sided Lipschitz
property of $f$.

\begin{prop}\label{prop:ul}
Let $I_N=\{1,2,\dots, N\}$ and $T=N\Delta t$. If $\|A^{\frac12}Q^{\frac12}\|_{\HS}<\infty$ and $\mathbb{E}\|u_0\|_1^2<\infty$, then there is $C>0$ independent of $T$ such that
$$
\mathbb{E}\sup_{l\in I_N}\|u^l\|^2+\mathbb{E}\sup_{l\in I_N}\|u^l\|^2_1\le C(1+T).
$$
\end{prop}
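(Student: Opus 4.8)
The plan is to derive two energy estimates for \eqref{eq:be}, first in $H$ and then in $\dot{H}^1$, and in each case to pull the supremum over the discrete times inside the expectation using a discrete Burkholder--Davies--Gundy (BDG) inequality. The device that avoids Gronwall's lemma, and keeps every constant linear in $T$, is to feed the integrated bound $\Delta t\sum_j\mathbb{E}\|u^j\|_1^2\le C(1+T)$ produced by the $H$-estimate into the $\dot{H}^1$-estimate, where it simultaneously tames the indefinite nonlinear term and controls the quadratic variation of the relevant martingale. The coercivity of $f$ (equivalently, the one-sided Lipschitz property) enters through the two pointwise bounds $F'(s)s\ge -C$ and $f'(s)=4c(3s^2-\beta^2)\ge -4c\beta^2$, and it is here that the implicitness of the scheme in $f$ is essential. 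I take for granted that the iterates are well defined and adapted; for $\Delta t$ small uniqueness follows from strong monotonicity of $u\mapsto u+\Delta t\,Au+\Delta t\,f(u)$.

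For the $H$-estimate I would test \eqref{eq:be} with $u^j$ and use the identity $\langle u^j-u^{j-1},u^j\rangle=\tfrac12(\|u^j\|^2-\|u^{j-1}\|^2+\|u^j-u^{j-1}\|^2)$, together with $\langle Au^j,u^j\rangle=\|u^j\|_1^2$ and $\langle f(u^j),u^j\rangle\ge -C$, to obtain
$$\tfrac12\big(\|u^j\|^2-\|u^{j-1}\|^2\big)+\tfrac12\|u^j-u^{j-1}\|^2+\Delta t\,\|u^j\|_1^2\le C\,\Delta t+\langle \Delta W^j,u^j\rangle.$$
Since the scheme is implicit, $\langle \Delta W^j,u^j\rangle$ is not a martingale increment, so I would split it as $\langle \Delta W^j,u^{j-1}\rangle+\langle \Delta W^j,u^j-u^{j-1}\rangle$ and use Young's inequality on the second summand to absorb $\tfrac12\|u^j-u^{j-1}\|^2$ on the left, leaving the martingale increment $\langle \Delta W^j,u^{j-1}\rangle$ and the term $\tfrac12\|\Delta W^j\|^2$ with $\mathbb{E}\|\Delta W^j\|^2=\Delta t\,\Tr Q$ (finite, since $\Tr Q=\|Q^{1/2}\|_{\HS}^2\le C\|A^{1/2}Q^{1/2}\|_{\HS}^2$). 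Summing over $j$ and taking expectations eliminates the martingale term and yields both $\mathbb{E}\|u^n\|^2\le C(1+T)$ and $\Delta t\sum_j\mathbb{E}\|u^j\|_1^2\le C(1+T)$. Taking instead the supremum over $n$ before integrating, and estimating $\mathbb{E}\sup_n|\sum_{j\le n}\langle \Delta W^j,u^{j-1}\rangle|$ by discrete BDG — its quadratic variation $\Delta t\sum_j\|Q^{1/2}u^{j-1}\|^2$ being bounded, via $\|u^{j-1}\|\le C\|u^{j-1}\|_1$, by the integrated bound just obtained — gives $\mathbb{E}\sup_n\|u^n\|^2\le C(1+T)$.

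For the $\dot{H}^1$-estimate I would test \eqref{eq:be} with $Au^j$; this is legitimate because $u^j\in D(A)$, which follows inductively from $u^{j-1},\Delta W^j\in\dot{H}^1$ and $f(u^j)\in H$ (the latter by the Sobolev embedding $\dot{H}^1\hookrightarrow L^6$, valid since $d\le 3$ and $f$ is at most cubic). The polarization identity in the $\|\cdot\|_1$-norm produces $\tfrac12(\|u^j\|_1^2-\|u^{j-1}\|_1^2)$ and a term $\tfrac12\|u^j-u^{j-1}\|_1^2$ that again absorbs the Young estimate of $\langle \Delta W^j,A(u^j-u^{j-1})\rangle$, at the cost of $\tfrac12\|\Delta W^j\|_1^2$ with $\mathbb{E}\|\Delta W^j\|_1^2=\Delta t\,\|A^{1/2}Q^{1/2}\|_{\HS}^2$. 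The nonlinear term is $\langle f(u^j),Au^j\rangle=\int_{\cD}f'(u^j)\,\kappa\,|\nabla u^j|^2\,\dd x\ge -4c\beta^2\|u^j\|_1^2$; this indefinite term is precisely where Gronwall would ordinarily be invoked, but after summation it appears as $C\,\Delta t\sum_{j\le n}\|u^j\|_1^2$, which is monotone in $n$ and whose full-sum expectation is already $\le C(1+T)$ by the $H$-estimate. Taking $\sup_n$ and then expectations, and bounding the $\dot{H}^1$-martingale $\sum_{j\le n}\langle \Delta W^j,Au^{j-1}\rangle$ by discrete BDG (quadratic variation $\le C\,\Delta t\sum_j\|u^{j-1}\|_1^2$, using $\|Q^{1/2}A^{1/2}\|=\|A^{1/2}Q^{1/2}\|\le\|A^{1/2}Q^{1/2}\|_{\HS}$), yields $\mathbb{E}\sup_n\|u^n\|_1^2\le C(1+T)$. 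Adding the two supremum bounds gives the assertion.

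The main obstacle is the joint treatment of the implicit stochastic and nonlinear terms: implicitness makes $\langle \Delta W^j,u^j\rangle$ fail to be a martingale increment, forcing the split-and-absorb step, while the superlinear growth of $f$ rules out any linear-growth estimate of the drift. The decisive, genuinely non-routine point is the bootstrapping itself — recognizing that the weak, integrated control $\Delta t\sum_j\|u^j\|_1^2$ coming out of the $H$-estimate is exactly what is needed to dominate both the indefinite term $\langle f(u^j),Au^j\rangle$ and the martingale quadratic variations in the $\dot{H}^1$-estimate, thereby replacing an exponential Gronwall factor by linear growth in $T$.
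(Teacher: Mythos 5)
Your proposal is correct and follows essentially the same route as the paper: testing with $u^j$ and $Au^j$, the split-and-absorb treatment of $\langle\Delta W^j,u^j\rangle$, the one-sided Lipschitz bounds $sf(s)\ge -C$ and $f'(s)\ge -C$, the bootstrapping of the integrated bound $\Delta t\sum_j\mathbb{E}\|u^j\|_1^2\le C(1+T)$ into the $\dot H^1$-estimate to avoid Gronwall, and a maximal martingale inequality for $\sum_{j\le l}\langle\Delta W^j,u^{j-1}\rangle_1$. The only cosmetic differences are that the paper obtains $\mathbb{E}\sup_l\|u^l\|^2$ directly from $\mathbb{E}\sup_l\|u^l\|_1^2$ via $\|\cdot\|\le C\|\cdot\|_1$ rather than by a separate BDG step, and uses Doob's $L^2$ maximal inequality with It\^o's isometry where you invoke a discrete BDG inequality.
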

\begin{proof}
  First note that it is enough to bound the second term on the left
  hand side since $\|\cdot\|\le C\|\cdot\|_1$.  Taking the inner
  product of \eqref{eq:be} with $u^j$, we get
$$
\langle u^j-u^{j-1}, u^j\rangle
+\Delta t\, \|u^j\|_{1}^2
+\Delta t\, \langle f(u^j),u^j\rangle
=\langle \Delta W^j, u^j\rangle.
$$
Using the identity $\langle
x-y,x\rangle=\frac{1}{2}(\|x\|^2-\|y\|^2)+\frac12\|x -y\|^2$ and the
fact that for some $C>0$ we have $sf(s)\ge -C$ for all $s\in
\mathbb{R}$ we get
\begin{align*}
&\frac{1}{2}(\|u^j\|^2-\|u^{j-1}\|^2)+\frac12\|u^j -u^{j-1}\|^2+\Delta t\, \|u^j\|_{1}^2 \\
&\qquad \le C\Delta t +\langle \Delta W^j, u^j-u^{j-1}\rangle+\langle \Delta W^j, u^{j-1}\rangle.
\end{align*}
Using a kick back with the second term on the right  and summing from $1$ to $n$ ($1\le n\le N$) gives
\begin{align*}
&\|u^n\|^2+\sum_{j=1}^n\|u^j -u^{j-1}\|^2+\Delta t\, \sum_{j=1}^n\|u^j\|_{1}^2 \\
&\qquad \le C\left(T+\|u_0\|^2+\sum_{j=1}^n\left(\|\Delta W^j\|^2+\langle \Delta W^j, u^{j-1}\rangle\right)\right).
\end{align*}
Taking expectation, using that $\Delta W^j$ is Gaussian with
covariance operator $\Delta t\, Q$ and hence $\mathbb{E}\|\Delta
W^j\|^2=\Delta t\, \Tr Q=\Delta t\, \|Q^{\frac12}\|^2_{\HS}$, and that
$\mathbb{E}\sum_{j=1}^n\langle \Delta W^j, u^{j-1}\rangle=0$, we
conclude
\begin{equation}\label{eq:iter1}
\begin{aligned}
&\mathbb{E}\left(\|u^n\|^2+ \sum_{j=1}^n\|u^j -u^{j-1}\|^2+\Delta t\, \sum_{j=1}^n\|u^j\|_{1}^2 \right)\\
&\qquad \le C\left(T+\mathbb{E}\|u_0\|^2+T\|Q^{\frac12}\|^2_{\HS}\right).
\end{aligned}
\end{equation}
Next, we take the inner product of \eqref{eq:be} with $Au^j$ and
obtain similarly as above
\begin{align*}
&\frac{1}{2}(\|u^j\|_{1}^2-\|u^{j-1}\|_{1}^2)+\frac{1}{2}\|u^j-u^{j-1}\|_{1}^2+\Delta t\, \|u^j\|_{2}^2+\Delta t\,\langle f(u^j),u^j\rangle_{1} \\
&\qquad = \langle \Delta W^j, u^j\rangle_{1}.
\end{align*}
Since $f'(s)\ge -C$, we have
\begin{equation*}
\langle f(u^j),u^j\rangle_{1}=\langle \nabla f(u^j),\nabla u^j\rangle
=\langle  f'(u^j)\nabla u^j,\nabla u^j\rangle
\ge -C\|u^j\|_1^2.
\end{equation*}
Hence,
\begin{align*}
&\frac{1}{2}(\|u^j\|_{1}^2-\|u^{j-1}\|_{1}^2)+\frac{1}{2}\|u^j-u^{j-1}\|_{1}^2+\Delta t\,\|u^j\|_{2}^2\\
&\qquad \le \Delta t\, C\|u^j\|_{1}^2+\langle\Delta W^j, u^j-u^{j-1}\rangle_{1}+\langle \Delta W^j, u^{j-1}\rangle_{1}.
\end{align*}
Thus, using a kick back with the second term, we obtain
\begin{align}\label{eq:1norm}
  \begin{split}
&\|u^l\|_{1}^2+\sum_{j=1}^l\|u^j-u^{j-1}\|_1^2+\Delta t\,\sum_{j=1}^l\|u^j\|^2_2\\
&\qquad\le C\left(\|u_0\|^2_1+\sum_{j=1}^l\left(\Delta t\,\|u^j\|^2_1
+\| \Delta W^j\|_1^2 +\langle \Delta W^j, u^{j-1}\rangle_{1}\right)\right).
  \end{split}
\end{align}
Therefore,
\begin{equation}\label{eq:iter2}
\begin{aligned}
&\mathbb{E}\sup_{l\in I_N}\left(\|u^l\|_{1}^2+\sum_{j=1}^l\|u^j-u^{j-1}\|_1^2+\Delta t\, \sum_{j=1}^l\|u^j\|^2_2\right)\\
&\quad\le C\mathbb{E}\|u_0\|^2_1+C\mathbb{E}\sup_{l\in I_N}\left(\sum_{j=1}^l\left(\Delta t\,\|u^j\|^2_1+\|\Delta W^j\|_1^2 +\langle\Delta W^j, u^{j-1}\rangle_{1}\right)\right)\\
&\quad\le C\mathbb{E}\|u_0\|^2_1+C\mathbb{E}\left(\sum_{j=1}^N\big(\Delta t\,\|u^j\|^2_1+\|\Delta W^j\|_1^2\big)\right)+C\mathbb{E}\sup_{l\in I_N}\sum_{j=1}^l\langle\Delta W^j, u^{j-1}\rangle_{1}.
\end{aligned}
\end{equation}
Since $A^{\frac12}\Delta W^j$ is a Gaussian random variable with
covariance operator $$\tilde{Q}:=\Delta t\,
A^{\frac12}Q^{\frac12}(A^{\frac12}Q^{\frac12})^*,$$ it follows, by
\eqref{eq:tr}, that
\begin{align*}
\mathbb{E}\|\Delta W^j\|_1^2=\Delta t\,\Tr\tilde Q=\Delta t\, \|A^{\frac12}Q^{\frac12}\|^2_{\HS}.
\end{align*}
Next note that $\sum_{j=1}^l\langle\Delta W^j, u^{j-1}\rangle_{1}$ is
an It\^o integral the form $\int_0^{t_l}\langle \eta(t),\dd
A^{\frac12}W(t)\rangle$, where $\eta$ is a piecewise continuous
process, and hence also a martingale.  Then, using H\"older's
inequality, the martingale inequality \cite[Theorem 3.8]{DPZ}, It\^o's
Isometry \eqref{eq:rito}, \eqref{eq:hs}, \eqref{eq:tr}, and
\eqref{eq:iter1},
\begin{align*}
&\left(\mathbb{E}\sup_{l\in I_N}\sum_{j=1}^l\langle \Delta W^j, u^{j-1}\rangle_{1}\right)^2\le \mathbb{E}\sup_{l\in I_N}\left(\sum_{j=1}^l\langle \Delta W^j, u^{j-1}\rangle_{1}\right)^2\\
&\qquad\le 4 \sup_{l\in I_N} \mathbb{E}\left(\sum_{j=1}^l\langle \Delta W^j, u^{j-1}\rangle_{1}\right)^2=4\mathbb{E}\Delta t\,\sum_{j=1}^N\|\tilde{Q}^{\frac12}A^{\frac12}u^{j-1}\|^2\\
&\qquad\le 4\|\tilde{Q}^{\frac12}\|^2\Delta t\, \sum_{j=1}^N\mathbb{E}\|u^{j-1}\|_1^2\le 4  \|\tilde{Q}^{\frac12}\|_{\HS}^2 \Delta t\,\sum_{j=1}^N\mathbb{E}\|u^{j-1}\|_1^2\\
&\qquad\le C\|A^{\frac12}Q^{\frac12}\|_{\HS}^2   (T+\mathbb{E}\|u_0\|^2+T\|Q^{\frac12}\|^2_{\HS}).
\end{align*}
Therefore, by \eqref{eq:iter2}, using also \eqref{eq:iter1}, we conclude that
\begin{align*}
\mathbb{E}\sup_{l\in I_N}\left(\|u^l\|_{1}^2+\sum_{j=1}^l\|u^j-u^{j-1}\|_1^2+\Delta t\,\sum_{j=1}^l\|u^j\|^2_2\right)
\le C(1+T)
\end{align*}
and the proof is complete.
\end{proof}
When proving strong convergence, even without rate, one needs bounds
on higher moments of the time discretization. This will be achieved
via a discrete Gronwall inequality, resulting in a bound that grows
exponentially with time. However, since our approach does not provide
rates for the strong error, this is not a major drawback. Note also,
that this result is the exact time-discrete analogue of the bounds on
the solution from Proposition \ref{prop:eb}.

\begin{prop}\label{prop:ulp}
  Let $p\ge 2$, $I_n=\{1,2,\dots, n\}$, $1\le n\le N$, and $ T=N\Delta t$. If
  $\|A^{\frac12}Q^{\frac12}\|_{\HS}<\infty$,
  $\mathbb{E}\|u_0\|_1^p<\infty$, and $T^{p-1}\Delta t\le \frac12$,
  then
$$
\mathbb{E}\sup_{l\in I_n}\|u^l\|^p+\mathbb{E}\sup_{l\in I_n}\|u^l\|^p_1\le C(T,p,u_0).
$$
\end{prop}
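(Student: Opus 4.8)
The plan is to mirror the energy argument of Proposition~\ref{prop:ul}, but now raising the $H^1$ estimate to the power $p/2$ before taking expectations, and to close the resulting inequality with a discrete Gronwall lemma rather than the bootstrap used there. As in Proposition~\ref{prop:ul} it suffices to bound $\mathbb{E}\sup_{l\in I_n}\|u^l\|_1^p$, since $\|\cdot\|\le C\|\cdot\|_1$. First I would start from the pathwise estimate \eqref{eq:1norm}, take $\sup_{l\le m}$ on both sides (the drift and noise-energy sums are nondecreasing in $l$, so only the martingale sum needs the supremum), raise to the power $p/2$, and use $(a_1+a_2+a_3+a_4)^{p/2}\le C_p\sum_i a_i^{p/2}$. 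Taking expectations produces four contributions: the initial datum $\mathbb{E}\|u_0\|_1^p$, finite by hypothesis; a drift term $\mathbb{E}\big(\Delta t\sum_{j=1}^m\|u^j\|_1^2\big)^{p/2}$; a noise-energy term $\mathbb{E}\big(\sum_{j=1}^m\|\Delta W^j\|_1^2\big)^{p/2}$; and a martingale term $\mathbb{E}\sup_{l\le m}\big|\sum_{j=1}^l\langle\Delta W^j,u^{j-1}\rangle_1\big|^{p/2}$.

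For the drift term I would apply Jensen's inequality in the discrete measure assigning weight $\Delta t$ to each index (total mass $t_m\le T$), giving $\big(\Delta t\sum_{j=1}^m\|u^j\|_1^2\big)^{p/2}\le T^{p/2-1}\Delta t\sum_{j=1}^m\|u^j\|_1^p$; after taking expectations and bounding $\|u^j\|_1^p\le\sup_{l\le j}\|u^l\|_1^p$ this becomes the term fed into Gronwall. For the noise-energy term, each $\Delta W^j$ is, in $\dot H^1$, Gaussian with covariance $\Delta t\,A^{\frac12}Q^{\frac12}(A^{\frac12}Q^{\frac12})^*$, so \eqref{eq:es1a} and \eqref{eq:tr} give $\mathbb{E}\|\Delta W^j\|_1^p\le C_p(\Delta t\,\|A^{\frac12}Q^{\frac12}\|_{\HS}^2)^{p/2}$; combined with the power-mean inequality this bounds the whole term by $C_p T^{p/2}\|A^{\frac12}Q^{\frac12}\|_{\HS}^p$, a finite constant.

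The martingale term is the crux. As observed in Proposition~\ref{prop:ul}, $M_l:=\sum_{j=1}^l\langle\Delta W^j,u^{j-1}\rangle_1$ is a martingale, realizable as an It\^o integral against $A^{\frac12}W$, with predictable quadratic variation bounded by $\Delta t\,\|A^{\frac12}Q^{\frac12}\|_{\HS}^2\sum_{j=1}^l\|u^{j-1}\|_1^2$. For $p\ge 4$ (so the exponent $p/2\ge2$) I would apply the Burkholder-Davies-Gundy inequality \eqref{eq:rbdg} to obtain a bound by $C_p\|A^{\frac12}Q^{\frac12}\|_{\HS}^{p/2}\mathbb{E}\big(\Delta t\sum_{j=1}^m\|u^{j-1}\|_1^2\big)^{p/4}$, and then use Young's inequality to split this into a piece $\frac14\mathbb{E}\sup_{l\le m}\|u^l\|_1^p$, absorbed into the left-hand side, plus a finite constant (keeping, if convenient, a lower-order Gronwall sum). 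For $2<p\le4$ the exponent $p/2\le2$ lies outside the range of \eqref{eq:rbdg}; there I would instead use Doob's martingale inequality \cite[Theorem 3.8]{DPZ}, then Jensen's inequality and It\^o's isometry \eqref{eq:rito}, and control $\Delta t\sum_j\mathbb{E}\|u^{j-1}\|_1^2$ by Proposition~\ref{prop:ul}, which already bounds the martingale term by a constant.

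Collecting the four estimates yields, with $\psi_m:=\mathbb{E}\sup_{l\le m}\|u^l\|_1^p$, an inequality of the form $\psi_m\le C(T,p,u_0)+C\,T^{p/2-1}\Delta t\sum_{j=1}^m\psi_j$ after absorbing the kicked-back fraction of $\psi_m$. Separating the diagonal $j=m$ term and absorbing it requires the smallness condition $T^{p-1}\Delta t\le\frac12$, after which the discrete Gronwall lemma gives $\psi_n\le C(T,p,u_0)$, with the constant now growing exponentially in $T$. The main obstacle is the martingale term: one must identify its quadratic variation correctly, treat the exponent range $2\le p<4$ where \eqref{eq:rbdg} is unavailable, and, most delicately, balance the powers of $T$ through the Young and Jensen steps so that the diagonal Gronwall term is exactly absorbed by the assumed smallness of $\Delta t$.
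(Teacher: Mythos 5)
Your proposal is correct and follows essentially the same route as the paper: starting from \eqref{eq:1norm}, raising to a power, treating the drift sum by H\"older/Jensen, the noise-energy sum by Gaussian moment bounds, the martingale sum by Burkholder-Davis-Gundy plus Young's inequality, and closing with a discrete Gronwall lemma after absorbing the diagonal term via $T^{p-1}\Delta t\le\frac12$. Your only deviation is the separate treatment of the low-exponent range $2<p\le 4$ via Doob's inequality, Jensen, and It\^o's isometry; this is a sensible refinement, since the paper applies \eqref{eq:rbdg} uniformly even where the martingale exponent falls below the stated range $p\ge 2$ of that inequality.
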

\begin{proof}
As noted in the proof of the previous proposition it is enough to bound the second term on the left hand side.
We start from \eqref{eq:1norm} and take the $p$th power of both sides for $p\ge 1$ to get
\begin{equation*}
\begin{aligned}
\|u^l\|_{1}^{2p}&\le C\left(\|u_0\|^{2p}_1+\Big(\sum_{j=1}^l\Delta t\, \|u^j\|^2_1\Big)^p\right.\\
&\quad+\left.\Big(\sum_{j=1}^l\| \Delta W^j\|_1^2\Big)^{p} +\Big(\sum_{j=1}^l\langle \Delta W^j, u^{j-1}\rangle_{1}\Big)^p\right)\\
& \le C\left(\|u_0\|^{2p}_1+\Delta t^{p-1}l^{p-1}\Delta t\, \sum_{j=1}^l\|u^j\|^{2p}_1\right.\\
&\quad \left. +l^{p-1}\sum_{j=1}^l\| \Delta W^j\|_1^{2p} +\Big(\sum_{j=1}^l\langle \Delta W^j, u^{j-1}\rangle_{1}\Big)^p\right).
\end{aligned}
\end{equation*}
Therefore,
\begin{equation}\label{eq:es1}
\begin{aligned}
\mathbb{E}\sup_{l\in I_n}\|u^l\|^{2p}_1&\le C \left(\mathbb{E}\|u_0\|^{2p}_1+T^{p-1}\Delta t\,\sum_{j=1}^n\mathbb{E}\sup_{l\in I_j}\|u^l\|^{2p}_1\right.\\
&\quad+ \left. n^{p-1}\sum_{j=1}^n\mathbb{E}\| \Delta W^j\|_1^{2p}+\mathbb{E}\sup_{l\in I_n}\Big(\sum_{j=1}^l\langle \Delta W^j, u^{j-1}\rangle_{1}\Big)^p\right).
\end{aligned}
\end{equation}
Next, we bound the last two terms in \eqref{eq:es1}. We already noted that $A^{\frac12}\Delta W^j$ is a Gaussian random variable with covariance operator $\tilde{Q}=\Delta t\, A^{\frac12}Q^{\frac12}(A^{\frac12}Q^{\frac12})^*$. Hence we use \eqref{eq:tr} and \eqref{eq:es1a} to bound its $2p$-th moment as
\begin{equation*}
\mathbb{E}\| \Delta W^j\|_1^{2p}\le C_p (\Tr\tilde{Q})^p=C_p\Delta t^p\|A^{\frac12}Q^{\frac12}\|^{2p}_{\HS}.
\end{equation*}
Therefore, it follows that
\begin{equation}\label{eq:es2}
n^{p-1}\sum_{j=1}^n\mathbb{E}\| \Delta W^j\|_1^{2p}\le C_pn^{p-1}\Delta t^p\sum_{j=1}^n\|A^{\frac12}Q^{\frac12}\|^{2p}_{\HS}\le C_pT^p\|A^{\frac12}Q^{\frac12}\|^{2p}_{\HS}.
\end{equation}
For the last term in \eqref{eq:es1} we use the Burkholder-Davies-Gundy inequality \eqref{eq:rbdg}, \eqref{eq:hs}, and \eqref{eq:tr} to conclude that
\begin{equation}\label{eq:es3}
\begin{aligned}
&\mathbb{E}\sup_{l\in I_n}\left(\sum_{j=1}^l\langle \Delta W^j, u^{j-1}\rangle_{1}\right)^p\le C_p\mathbb{E}\left( \Delta t\,\sum_{j=1}^n \|\tilde{Q}^{\frac12}A^{\frac12}u^{j-1}\|^2\right)^{p/2}\\
&\qquad\le C \|\tilde{Q}^{\frac12}\|^p \Delta t^{p/2} n^{p/2-1} \sum_{j=1}^n \mathbb{E}\|u^{j-1}\|_1^{p}\\
&\qquad\le
C \|\tilde{Q}^{\frac12}\|^p_{\HS}T^{p/2-1} \Delta t\, \sum_{j=0}^{n-1}\left(\frac12+\frac{1}{2}\mathbb{E}\sup_{l\in I_j}\|u^l\|^{2p}_1\right)\\
&\qquad=C\|A^{\frac12}Q^{\frac12}\|^{p}_{\HS}T^{p/2}+ C\|A^{\frac12}Q^{\frac12}\|^{p}_{\HS}T^{p/2-1}\Delta t\, \sum_{j=0}^{n-1}\left(\mathbb{E}\sup_{l\in I_j}\|u^l\|^{2p}_1\right).
\end{aligned}
\end{equation}
Finally, substituting \eqref{eq:es2} and \eqref{eq:es3} into
\eqref{eq:es1} yields the desired bound by using the discrete Gronwall
inequality. Before applying the discrete Gronwall inequality we kick
back the last term from the sum $T^{p-1}\Delta
t\sum_{j=1}^n\mathbb{E}\sup_{l\in I_j}\|u^l\|^{2p}_1$ in \eqref{eq:es1} using the
condition $T^{p-1}\Delta t\le \frac12$.
\end{proof}
\section{The convergence results}\label{sec:mr}

We begin by showing a maximal type error estimate for the linear problem. Define the
Backward Euler approximation of the stochastic convolution $W_A(t_n)$ by
$$
W_A^n:=\sum_{k=1}^nE^{n-k+1}\Delta W^k=\sum_{k=1}^n\int_{t_{k-1}}^{t_k}E^{n-k+1}\,\dd W(s),
\ \text{where $E^n=(I+\Delta t\, A)^{-n}$.}
$$

The following result has been proved in a larger generality for
multiplicative noise in Banach spaces using heavy machinery in the
range $0\le \beta<1$. This would be enough for the purposes of the
semilinear problem with additive noise.  However, it is possible to
obtain the range $0\le \beta \le 2$ because the noise is additive and
the approximation of the noise is exact at the mesh points. Since this
result is interesting on its own, and the proof presented here is
rather elementary based on a discrete version of the factorization
method, we present the result and the proof for the full range $0\le
\beta \le 2$.

\begin{prop}\label{prop:wa}
  Let $\varepsilon\in (0,\frac12)$, $p>\frac{1}{\varepsilon}$, $0\le \beta \le 2$,
 and $T=N\Delta t$. Then there is $C=C(p,\varepsilon,T)$ such that
$$
\left(\mathbb{E}\sup_{t_n\in [0,T]}\|W_A(t_n)-W_A^n\|^p\right)^\frac{1}{p}\le
C\Delta t^{\frac{\beta
    }{2}}\|A^{\frac{\beta-1}{2}+\varepsilon}Q^{\frac12}\|_{\HS},\quad
t_n=n\Delta t.
$$
\end{prop}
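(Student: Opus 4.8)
The plan is to adapt the factorization method of Da Prato--Kwapie\'n--Zabczyk to the discrete setting. The starting point is the continuous factorization identity
$$
1 = \frac{\sin \pi\alpha}{\pi}\int_s^t (t-\sigma)^{\alpha-1}(\sigma-s)^{-\alpha}\,\dd\sigma,
\qquad 0<s<t,\ 0<\alpha<1,
$$
which, inserted into $W_A(t_n)=\int_0^{t_n}E(t_n-s)\,\dd W(s)$ together with $E(t_n-s)=E(t_n-\sigma)E(\sigma-s)$ and the stochastic Fubini theorem, yields
$$
W_A(t_n)=c_\alpha\int_0^{t_n}(t_n-\sigma)^{\alpha-1}E(t_n-\sigma)Y(\sigma)\,\dd\sigma,
\qquad
Y(\sigma)=\int_0^\sigma(\sigma-s)^{-\alpha}E(\sigma-s)\,\dd W(s).
$$
First I would establish the exact discrete analogue: using $E^{n-k+1}=E^{n-m}E^{m-k+1}$ and a discrete Beta-type identity $c_\alpha^{\Delta t}\,\Delta t\sum_{m=k}^n a_{n-m}b_{m-k}=1$ with weights $a_j\approx t_j^{\alpha-1}$, $b_j\approx t_j^{-\alpha}$, I can write
$$
W_A^n=c_\alpha^{\Delta t}\,\Delta t\sum_{m=1}^n a_{n-m}E^{n-m}Y^m,
\qquad
Y^m=\sum_{k=1}^m b_{m-k}E^{m-k+1}\,\Delta W^k.
$$
The exponent $\alpha$ will be chosen in $(\tfrac1p,\tfrac12)$ (comparable to $\varepsilon$), which is possible since $p>\tfrac1\varepsilon$ and $\varepsilon<\tfrac12$ force $\tfrac1p<\varepsilon<\tfrac12$.

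The error then splits as $W_A(t_n)-W_A^n=I_n+II_n$, where $I_n$ collects the difference of the deterministic reconstruction kernels applied to the inner processes, and $II_n$ collects the difference of the inner processes $Y(\sigma)$ and $Y^m$ under a common reconstruction. The whole point of the factorization is that the outer reconstruction is now a \emph{deterministic} operation in time. Applying H\"older's inequality in $\sigma$ (respectively in $m$) with conjugate exponent $q=\tfrac{p}{p-1}$, the condition $\alpha>\tfrac1p$ guarantees that $\big(\int_0^{t_n}(t_n-\sigma)^{(\alpha-1)q}\,\dd\sigma\big)^{1/q}$ and its discrete counterpart are bounded uniformly in $n$ and $\Delta t$. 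This is exactly the mechanism that converts $\sup_{t_n\in[0,T]}$ into an $L^p$-norm in time, so that after taking expectations it remains to bound $\mathbb{E}\int_0^{t_n}\|\cdot\|^p\,\dd\sigma$ and $\Delta t\sum_m\mathbb{E}\|\cdot\|^p$ of the inner quantities.

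For these moments I would use that $Y(\sigma)$, $Y^m$, and their differences are $H$-valued Gaussian random variables for each fixed time index, so their $p$-th moments reduce to second moments via \eqref{eq:es1a}, which are then computed by It\^o's isometry \eqref{eq:rito}. For $I_n$ this brings in the rational-approximation difference $E(t_n-\sigma)-a_{n-m}E^{n-m}$, and for $II_n$ the difference $E(\sigma-s)-b_{m-k}E^{m-k+1}$; both are controlled by estimates of the type \eqref{eq:anal} and \eqref{eq:anal0} together with the implicit Euler error, whose order is limited to one — this is the source of the restriction $\beta\le 2$. Collecting the powers of $A$ — a factor $\beta/2$ traded against $\Delta t^{\beta/2}$ in the difference estimate, a shift of $-\tfrac12$ from the time integration in the isometry, and a margin $\varepsilon$ absorbed from the singular factors $(\sigma-s)^{-\alpha}$ — produces the Hilbert--Schmidt norm $\|A^{(\beta-1)/2+\varepsilon}Q^{\frac12}\|_{\HS}$ and the rate $\Delta t^{\beta/2}$, as claimed.

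The main obstacle I expect is the construction and uniform control of the \emph{discrete} factorization. One must choose $a_j,b_j$ so that $c_\alpha^{\Delta t}\Delta t\sum_{m=k}^n a_{n-m}b_{m-k}=1$ holds exactly (or with an error that can be absorbed), and then show that the singular discrete sums $\Delta t\sum_m t_{n-m}^{(\alpha-1)q}$ and the inner second-moment sums $\Delta t\sum_k t_{m-k}^{-2\alpha}\|\cdot\|_{\HS}^2$ are bounded uniformly in $\Delta t$, including the delicate terms near $\sigma\approx t_n$ and $s\approx\sigma$ where the kernels blow up. Handling these boundary layers, and matching the discrete difference estimates for $E^{n-k+1}$ to the continuous ones in \eqref{eq:anal} with precisely the right powers of $A$ and $\Delta t$, is where the real work lies.
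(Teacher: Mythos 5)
Your proposal follows essentially the same route as the paper: there, too, the error is handled by the continuous factorization identity (for the part coming from the within-interval variation of $E(t_n-\sigma)$, written as $\int_0^{t_n}E(t_n-\sigma)\Psi(\sigma)\,\dd W(\sigma)$ with $\Psi(\sigma)=I-E(\sigma-t_{k-1})$) together with a discrete factorization whose normalizing constants $c_{n,k}$ depend on the index difference but are uniformly bounded (for the grid-to-grid rational-approximation error, split exactly as you describe into an outer-kernel difference $F_{n-l}Y^l$ and an inner-process difference $Y^l-\tilde Y^l$), followed by H\"older in the outer variable, Gaussian moment bounds via \eqref{eq:es1a} and It\^o's isometry for the inner processes, and the smooth-data estimate $\|A^{\frac{\rho}{2}}(E(t_k)-E^k)A^{-\frac{\gamma}{2}}\|\le C\Delta t^{\frac{\beta}{2}}t_k^{-\frac{\beta-\gamma+\rho}{2}}$. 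The one correction: $\alpha$ must be chosen in $(\frac1p,\varepsilon)$, not merely in $(\frac1p,\frac12)$, because the inner second-moment integrals behave like $\int(s-\sigma)^{-1-2\alpha+2\varepsilon}\,\dd\sigma$ and converge only for $\alpha<\varepsilon$; this is consistent with your parenthetical ``comparable to $\varepsilon$'' but should be stated explicitly.
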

\begin{proof}
  Define the deterministic error operator $F_n$ by
  $F_n=E(t_n)-E^n$. It is well known that the following error estimate
  holds
\begin{equation}\label{eq:dete}
\|A^{\frac{\rho}{2}}F_nv\|\le C\Delta t^\frac{\beta}{2}
t_n^{-\frac{\beta-\gamma+\rho}{2}}\|A^{\frac{\gamma}{2}}v\|,
\quad 0\le \gamma\le \beta+\rho,~\rho,\gamma\ge 0,~\beta\in [0,2].
\end{equation}
Next, we consider the decomposition
\begin{align*}
W_A(t_n)-W_A^n&=\sum_{k=1}^n\int_{t_{k-1}}^{t_k}(E(t_n-\sigma)-E^{n-k+1})\,\dd W(\sigma)\\
&=\sum_{k=1}^n\int_{t_{k-1}}^{t_k}(E(t_n-\sigma)-E(t_{n-k+1}))\,\dd W(\sigma)\\
&\quad+\sum_{k=1}^n\int_{t_{k-1}}^{t_k}(E(t_{n-k+1})-E^{n-k+1})\,\dd W(\sigma)=:e^n_1+e^n_2.
\end{align*}
To estimate $e_1$ we first write
\begin{align*}
e^n_1=\sum_{k=1}^n\int_{t_{k-1}}^{t_k}E(t_n-\sigma)(I-E(\sigma-t_{k-1}))\,\dd
W(\sigma)
=\int_0^{t_n}E(t_n-\sigma)\Psi(\sigma)\,dW(\sigma)
\end{align*}
with $\Psi(\sigma)=(I-E(\sigma-t_{k-1}))$ for $\sigma\in (t_{k-1},t_k]$.
Next we use the factorization method from \cite[Chapter 5]{DPZ} to write
\begin{align*}
e^n_1&=c_{\alpha}\int_0^{t_n}E(t_n-\sigma)\int_{\sigma}^{t_n}(t_n-s)^{-1+\alpha}(s-\sigma)^{-\alpha}\,\dd s\,\dd W(\sigma)\\
&=c_{\alpha}\int_0^{t_n}(t_n-s)^{-1+\alpha}E(t_n-s)\int_0^s(s-\sigma)\Psi(\sigma)E(s-\sigma)\,\dd W(\sigma)\,\dd s\\
&=c_{\alpha}\int_0^{t_n}(t_n-s)^{-1+\alpha}E(t_n-s)Y(s)\,\dd s,
\end{align*}
where $\alpha\in (0,\frac12)$, $c_\alpha^{-1}=\int_{\sigma}^t(t-s)^{-1+\alpha}(s-\sigma)^{-\alpha}\,\dd s$ and
$$
Y(s)=\int_0^s(s-\sigma)\Psi(\sigma)E(s-\sigma)\,\dd W(\sigma).
$$
Therefore, by H\"older's inequality and that $\|E(t)\|\le 1$ for all $t\geq 0$,
\begin{align*}
\mathbb{E}\sup_{t_n\in [0,T]}\|e_2^n\|^p\le c_\alpha \left(\int_0^T s^{(-1+\alpha)\frac{p}{p-1}}\,\dd s\right)^{p-1}\int_0^T\mathbb{E}\|Y(s)\|^p\,\dd s.
\end{align*}
The first integral is finite for $p>\frac{1}{\alpha}$. To bound the
second integral, notice that $Y(s)$ is a Gaussian random variable for
all $s\in [0,T]$ and therefore, we use \eqref{eq:es1a} to bound its
$p$-th moment, \eqref{eq:hs}, \eqref{eq:anal0} with $\beta=\frac12
-\varepsilon$, and \eqref{eq:anal} with $\beta=\gamma=0$ and
$\rho=\frac{\beta}{2}$, to obtain
\begin{align*}
\mathbb{E}\|Y(s)\|^p&\le C_p\left(\int_0^s(s-\sigma)^{-2\alpha}\|\Psi(\sigma)E(s-\sigma)Q^{\frac12}\|^2_{\HS}\,\dd \sigma\right)^{\frac{p}{2}}\\
&=C_p\left(\int_0^s(s-\sigma)^{-2\alpha}\|\Psi(\sigma)A^{-\frac{\beta}{2}}A^{\frac12-\varepsilon}E(s-\sigma)A^{\frac{\beta-1}{2}+\varepsilon}Q^{\frac12}\|^2_{\HS}\,\dd \sigma\right)^{\frac{p}{2}}\\
&\le C_p\|A^{\frac{\beta-1}{2}+\varepsilon}Q^{\frac12}\|_{\HS}^p\left(\int_0^s(s-\sigma)^{-2\alpha -1+2\varepsilon}\|\Psi(\sigma)A^{-\frac{\beta}{2}}\|^2\,\dd \sigma\right)^{\frac{p}{2}}
\\
&\le C\Delta t^{\frac{\beta p}{2}}\|A^{\frac{\beta-1}{2}+\varepsilon}Q^{\frac12}\|_{\HS}^p\left(\int_0^s(s-\sigma)^{-2\alpha -1+2\varepsilon}\,\dd s \right)^{\frac{p}{2}}\\
&\le C_{T,p,\alpha,\varepsilon}\Delta t^{\frac{\beta p}{2}}\|A^{\frac{\beta-1}{2}+\varepsilon}Q^{\frac12}\|_{\HS}^p,
\end{align*}
provided that $\alpha<\varepsilon$. Given $p>1/\varepsilon$, we thus
need to choose $\alpha\in(\frac1p,\varepsilon)$. We conclude
$$
\int_0^T\mathbb{E}\|Y(s)\|^p\,\dd s\le TC_{T,p,\alpha,\varepsilon}\Delta t^{\frac{\beta p}{2}}\|A^{\frac{\beta-1}{2}+\varepsilon}Q^{\frac12}\|_{\HS}^p,
$$
which proves the bound on $e_1^n$.  To bound $e_2^n$ we use a
discrete version of the factorization method. First introduce the
constants
$$
c_{n,k}:=\Big(\Delta t\,\sum_{l=k}^nt_{n-l+1}^{-1+\alpha}t_{l-k+1}^{-\alpha}\Big)^{-1}.
$$
It is not difficult to see that $c_{n,k}\le C$ for all $1\le k\le n$. Then we have
\begin{align*}
e_2^n&=\sum_{k=1}^nE(t_{n-k+1})c_{n,k}\left(\Delta t\,\sum_{l=k}^nt_{n-l+1}^{-1+\alpha}t_{l-k+1}^{-\alpha}\right)\Delta W^k\\
&\quad-\sum_{k=1}^nE^{n-k+1}c_{n,k}\left(\Delta t\,\sum_{l=k}^nt_{n-l+1}^{-1+\alpha}t_{l-k+1}^{-\alpha}\right)\Delta W^k\\
&=\Delta t\, \sum_{l=1}^nt_{n-l+1}^{-1+\alpha}E(t_{n-l})\sum_{k=1}^l c_{n,k} t_{l-k+1}^{-\alpha} E(t_{l-k+1})\Delta W^k\\
&\quad - \Delta t\, \sum_{l=1}^nt_{n-l+1}^{-1+\alpha}E^{n-l}\sum_{k=1}^l c_{n,k} t_{l-k+1}^{-\alpha} E^{l-k+1}\Delta W^k\\
&=\Delta t\, \sum_{l=1}^nt_{n-l+1}^{-1+\alpha}E(t_{n-l})Y^l-\Delta t\, \sum_{l=1}^nt_{n-l+1}^{-1+\alpha}E^{n-l}\tilde{Y}^l\\
&=\Delta t\, \sum_{l=1}^nt_{n-l+1}^{-1+\alpha}F_{n-l}Y^l+\Delta t\,\sum_{l=1}^n t_{n-l+1}^{-1+\alpha} E^{n-l}(Y^l-\tilde{Y}^l)=:e^n_{21}+e^n_{22},
\end{align*}
where
$$
Y_l=\sum_{k=1}^l c_{n,k} t_{l-k+1}^{-\alpha} E(t_{l-k+1})\Delta W^k,
\quad
\tilde{Y}_l=\sum_{k=1}^l c_{n,k} t_{l-k+1}^{-\alpha} E^{l-k+1}\Delta W^k.
$$
Next, we bound $e^n_{21}$, by H\"older's inequality and \eqref{eq:dete} with $\rho=0$ and $\gamma=\beta$, as follows
\begin{align*}
\mathbb{E}\sup_{t_n\in [0,T]}\|e^n_{21}\|^p
&\le  \left(\Delta t\, \sum_{l=1}^N\big(t_{l}^{-1+\alpha}\|F_{l}A^{-\frac{\beta}{2}}\|\big)^{\frac{p}{p-1}}\right)^{p-1}\mathbb{E}\Delta t\,\sum_{l=1}^N\|A^{\frac{\beta}{2}}Y^l\|^p\\
&\le C \Delta t^{\frac{\beta p}{2}} \left(\Delta t\, \sum_{l=1}^nt_l^{(-1+\alpha)\frac{p}{p-1}}\right)^{p-1} \mathbb{E}\Delta t\,\sum_{l=1}^N\|A^{\frac{\beta}{2}}Y^l\|^p,
\end{align*}
where the first sum is finite if $p>\frac{1}{\alpha}$. To estimate
the last sum, note that $A^{\beta/2}Y^l$ is a Gaussian random variable
and hence, as before, we use \eqref{eq:es1a} to bound its $p$-th
moment. Therefore, using also \eqref{eq:hs} and \eqref{eq:anal0},
\begin{align*}
&\mathbb{E}\Delta t\,\sum_{l=1}^N\|A^{\frac{\beta}{2}}Y^l\|^p=\Delta t\,\sum_{l=1}^N\mathbb{E}\left\|\left(\sum_{k=1}^l c_{n,k} t_{l-k+1}^{-\alpha} A^{\frac{\beta}{2}}E(t_{l-k+1})\Delta W^k\right)\right\|^p\\
&\qquad =\Delta t\,\sum_{l=1}^N \left(\Delta t\,\sum_{k=1}^l c^2_{n,k} t_{l-k+1}^{-2\alpha} \|A^{\frac{\beta}{2}}E(t_{l-k+1})Q^{\frac12}\|_{\HS}^2\right)^{\frac{p}{2}} \\
&\qquad \le C \Delta t\,\sum_{l=1}^N \left(\Delta t\,\sum_{k=1}^N t_{k}^{-2\alpha} \|E(t_{k})A^{\frac12-\varepsilon}A^{\frac{\beta-1}{2}+\varepsilon}Q^{\frac12}\|_{\HS}^2\right)^{\frac{p}{2}}\\
&\qquad \le CT \|A^{\frac{\beta-1}{2}+\varepsilon}Q^{\frac12}\|^p_{\HS}\left(\Delta t\, \sum_{k=1}^N t_{k}^{-1-2\alpha+2\varepsilon}\right)^{\frac{p}{2}}\le C_{T,p,\alpha,\varepsilon} \|A^{\frac{\beta-1}{2}+\varepsilon}Q^{\frac12}\|^p_{\HS},
\end{align*}
provided that $\alpha<\varepsilon$. Finally, we estimate
$e_{22}^n$. By H\"older's inequality we first get
\begin{align*}
&\mathbb{E}\sup_{t_n\in [0,T]}\|e_{22}^n\|^p\le \left(\Delta t\,\sum_{l=1}^N t_{l}^{-1+\alpha} \|E^{l}\|^{\frac{p}{p-1}}\right)^{p-1}\mathbb{E} \sum_{l=1}^N\|Y(l)-\tilde{Y}(l)\|^p\\
&\qquad \le  \left(\Delta t\,\sum_{l=1}^N t_{l}^{(-1+\alpha)\frac{p}{p-1}}\right)^{p-1}\mathbb{E} \sum_{l=1}^N\|Y(l)-\tilde{Y}(l)\|^p\le C_{\alpha,p}\sum_{l=1}^N\|Y(l)-\tilde{Y}(l)\|^p,
\end{align*}
if $p>\frac{1}{\alpha}$. To estimate the last term, we use
\eqref{eq:es1a} to bound the $p$-th moment of a Gaussian random
variable and also \eqref{eq:hs} and \eqref{eq:dete} with $\rho=1-2\varepsilon$ and $\gamma=\beta$ to get
\begin{align*}
&\sum_{l=1}^N\|Y(l)-\tilde{Y}(l)\|^p=\sum_{l=1}^N\left\|\sum_{k=1}^l c_{n,k} t_{l-k+1}^{-\alpha}F_{l-k+1}\Delta W^k\right\|^{p}\\
&\qquad\le C_p \sum_{l=1}^N \left(\Delta t\, \sum_{k=1}^N t_k^{-2\alpha} \|F_{k}Q^{\frac12}\|_{\HS}^2\right)^{\frac{p}{2}}\\
&\qquad=C_p \sum_{l=1}^N \left(\Delta t\, \sum_{k=1}^N t_k^{-2\alpha} \|A^{\frac{1}{2}-\varepsilon}F_{k}A^{-\frac{\beta}{2}}A^{\frac{\beta-1}{2}+\varepsilon}Q^{\frac12}\|_{\HS}^2\right)^{\frac{p}{2}}\\
&\qquad \le C_p \Delta t^{\frac{\beta p}{2}} \left(\Delta t\, \sum_{k=1}^N t_k^{-1-2\alpha+2\varepsilon}\right)^{\frac{p}{2}}\|A^{\frac{\beta-1}{2}+\varepsilon}Q^{\frac12}\|^p_{\HS}\\
&\qquad \le C_{T,p,\alpha,\varepsilon}\Delta t^{\frac{\beta p}{2}}\|A^{\frac{\beta-1}{2}+\varepsilon}Q^{\frac12}\|^p_{\HS},
\end{align*}
whenever $\alpha<\varepsilon$, which finishes the proof.
\end{proof}
Next we state a Lipschitz estimate for $f(u)$.  Here we use Sobolev's
inequality and, similarly to \eqref{eq:lg}, it is crucial that $d\le 3$ and that the nonlinearity
$f$ is at most cubic.  For a proof we refer to \cite[Lemma 2.5]{KML}.

\begin{lem}\label{lem:lip}
For all $u,v\in \dot{H}^1$ we have
$$
\|A^{-\frac{1}{2}}(f(u)-f(v))\|\le C(\|u\|^2_1+\|v\|^2_1)\|u-v\|.
$$
\end{lem}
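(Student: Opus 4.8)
The plan is to pass to the negative-order norm by duality. Since $\|A^{1/2}\phi\|=\|\phi\|_1$, for any $g\in H$ one has
$$
\|A^{-\tfrac12}g\|=\sup_{0\neq\phi\in\dot H^1}\frac{\langle g,\phi\rangle}{\|\phi\|_1},
$$
that is, $\|A^{-1/2}g\|$ is exactly the $\dot H^{-1}$-norm of $g$. Hence it suffices to bound $\langle f(u)-f(v),\phi\rangle$ uniformly over $\|\phi\|_1\le 1$, and the problem is reduced to a purely integral estimate. The legitimacy of this characterization on $\dot H^1=H^1_0(\cD)$ is immediate from the spectral definition of $A^{\pm 1/2}$.

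The key algebraic step is to factor out $u-v$. Writing $f(s)=F'(s)=4cs^3-4c\beta^2 s$ and using $u^3-v^3=(u-v)(u^2+uv+v^2)$, I would write
$$
f(u)-f(v)=(u-v)\,g,\qquad g:=4c(u^2+uv+v^2)-4c\beta^2,
$$
so that $\langle f(u)-f(v),\phi\rangle=\int_{\cD}(u-v)\,g\,\phi\,\dd x$. I then apply H\"older's inequality with the exponent triple $(2,3,6)$, admissible since $\tfrac12+\tfrac13+\tfrac16=1$, to obtain $|\langle f(u)-f(v),\phi\rangle|\le\|u-v\|\,\|g\|_{L^3}\,\|\phi\|_{L^6}$. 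The quadratic factor is controlled in $L^3$ by $L^6$-norms, via $\|u^2\|_{L^3}=\|u\|_{L^6}^2$ and $ab\le\tfrac12(a^2+b^2)$ for the cross term, giving $\|g\|_{L^3}\le C(\|u\|_{L^6}^2+\|v\|_{L^6}^2)+C$. Finally I invoke Sobolev's embedding $H^1\hookrightarrow L^6$ — valid precisely because $d\le 3$, exactly as in \eqref{eq:lg} — to replace $\|\cdot\|_{L^6}$ by $\|\cdot\|_1$ and $\|\phi\|_{L^6}$ by $\|\phi\|_1\le 1$. Collecting terms, the cubic part yields the asserted bound $C(\|u\|_1^2+\|v\|_1^2)\|u-v\|$, while the linear part $-4c\beta^2(u-v)$ contributes only $C\|u-v\|$ (using $\|u-v\|\,\|\phi\|\le C\|u-v\|\,\|\phi\|_1$), a lower-order term absorbed into the estimate.

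The hard part is not any single estimate but the exponent bookkeeping: one must choose the critical H\"older triple so that both cubic-type factors land in $L^6$, which is the borderline Sobolev exponent in dimension three. This is where the restriction $d\le 3$ together with the at-most-cubic growth of $f$ is essential — for $d\ge 4$ the exponent $6$ no longer embeds $H^1$ and the nonlinearity would be supercritical, so the same argument would break down. Everything else is a routine application of duality, H\"older, and the Sobolev inequality.
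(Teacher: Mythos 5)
The paper offers no proof of this lemma at all: it simply defers to \cite[Lemma 2.5]{KML}. Your argument is the standard (and surely the intended) one --- duality to realize $\|A^{-1/2}g\|$ as $\sup_{\|\phi\|_1\le 1}\langle g,\phi\rangle$, the factorization $f(u)-f(v)=(u-v)\bigl(4c(u^2+uv+v^2)-4c\beta^2\bigr)$, H\"older with exponents $(2,3,6)$, and the Sobolev embedding $H^1_0(\cD)\hookrightarrow L^6(\cD)$ for $d\le 3$ --- and every step up to the last one is sound. (One minor point worth making explicit: the duality identity needs $f(u)-f(v)\in H$, which holds because $u,v\in L^6$ and $f$ is cubic; and $\|\phi\|_{L^6}\le C\|\phi\|_1$ uses the equivalence of $\|A^{1/2}\cdot\|$ with the full $H^1$-norm on $H^1_0$, via the ellipticity of $a$.)

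The one genuine flaw is the final claim that the contribution $C\|u-v\|$ of the linear term $-4c\beta^2(u-v)$ is ``absorbed into the estimate.'' It cannot be: the right-hand side $C(\|u\|_1^2+\|v\|_1^2)\|u-v\|$ vanishes to third order as $u,v\to 0$, while the left-hand side does not. Concretely, take $u=\epsilon e_1$, $v=0$; then $\|A^{-1/2}(f(u)-f(v))\|\ge 4c\beta^2\lambda_1^{-1/2}\epsilon-O(\epsilon^3)$, whereas $C(\|u\|_1^2+\|v\|_1^2)\|u-v\|=C\lambda_1\epsilon^3$, so the inequality as printed fails for small $\epsilon$ no matter what $C$ is. What your argument actually proves --- and what the cited lemma in \cite{KML} asserts --- is
\begin{equation*}
\|A^{-\frac12}(f(u)-f(v))\|\le C\bigl(1+\|u\|^2_1+\|v\|^2_1\bigr)\|u-v\|,
\end{equation*}
which is the correct statement and is all that is used in the error analysis (there $\|u\|_1$, $\|v\|_1$ are a priori bounded, so the extra constant is harmless). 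So the defect lies in the statement as printed rather than in your method; you should simply carry the $1+$ through instead of claiming absorption.
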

We are now ready to state and prove the pathwise convergence of the Backward Euler scheme defined in \eqref{eq:be}.

\begin{thm}\label{thm:pwc}
Let $\varepsilon>0$, $\|A^{\frac12+\varepsilon} Q^{\frac12}\|_{\HS}<\infty$, $\mathbb{E}\|u_0\|_1^2<\infty$, $0\le \gamma <\frac12$, and $T=N\Delta t$. Then, there are finite random variables $K\ge 0$ and $\Delta t_0>0$ such that, almost surely,
$$
\sup_{t_n\in [0,T]}\|u(t_n)-u^n\|\le K \Delta t^\gamma,\quad t_n=n\Delta t,\quad \Delta t\le \Delta t_0.
$$
\end{thm}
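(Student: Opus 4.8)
The plan is to combine the discrete mild formulation of \eqref{eq:be} with a weakly singular discrete Gronwall inequality, carrying out the analysis pathwise. Writing the scheme as $u^j=(I+\Delta t\,A)^{-1}(u^{j-1}-\Delta t\,f(u^j)+\Delta W^j)$ and iterating gives the discrete mild representation $u^n=E^nu_0-\Delta t\sum_{k=1}^nE^{n-k+1}f(u^k)+W_A^n$, with $E^n=(I+\Delta t\,A)^{-n}$ and $W_A^n$ as defined before Proposition~\ref{prop:wa}. Subtracting this from \eqref{eq:mild} at $t=t_n$ and grouping terms, I would split the error $e^n:=u(t_n)-u^n$ into four pieces: the initial-data term $(E(t_n)-E^n)u_0$; a deterministic consistency term comparing $\int_0^{t_n}E(t_n-s)f(u(s))\,\dd s$ with $\Delta t\sum_{k=1}^nE^{n-k+1}f(u(t_k))$; the stochastic-convolution term $W_A(t_n)-W_A^n$; and the nonlinear coupling $\Delta t\sum_{k=1}^nE^{n-k+1}\big(f(u(t_k))-f(u^k)\big)$.

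First I would dispose of the three pieces not involving the numerical solution. By \eqref{eq:dete} the initial-data term satisfies $\|(E(t_n)-E^n)u_0\|\le C\Delta t^{\gamma}\|u_0\|_{2\gamma}\le C\Delta t^{\gamma}\|u_0\|_1$ (using $2\gamma<1$). The consistency term I would split into a semigroup part $\Delta t\sum_kF_{n-k+1}f(u(t_k))$, bounded via \eqref{eq:dete} (which gives $\|F_mv\|\le C\Delta t^{\gamma}t_m^{-\gamma}\|v\|$), the summability $\Delta t\sum_mt_m^{-\gamma}\le CT^{1-\gamma}$, and the almost sure bound $\sup_s\|f(u(s))\|<\infty$ from \eqref{eq:lg} and Proposition~\ref{prop:eb}; and a quadrature part, whose integrand I would write as $[E(t_n-s)-E(t_{n-k+1})]f(u(s))+E(t_{n-k+1})[f(u(s))-f(u(t_k))]$, estimating the first summand by the Hölder semigroup bound \eqref{eq:anal} and the second by \eqref{eq:anal0}, Lemma~\ref{lem:lip}, and the $\gamma$-Hölder continuity of $u$ from Proposition~\ref{propo:ulip}. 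All of these are $O(\Delta t^{\gamma})$ almost surely. For the stochastic convolution I would apply Proposition~\ref{prop:wa} with $\beta=2\gamma^\ast$ for some $\gamma<\gamma^\ast<\frac12$---admissible since $\|A^{\frac{2\gamma^\ast-1}{2}+\varepsilon'}Q^{\frac12}\|_{\HS}\le C\|A^{\frac12+\varepsilon}Q^{\frac12}\|_{\HS}<\infty$---to obtain a moment bound of order $\Delta t^{\gamma^\ast}$, and then upgrade it to $\sup_{t_n}\|W_A(t_n)-W_A^n\|\le K_0\Delta t^{\gamma}$ for $\Delta t\le\Delta t_0$ by Borel--Cantelli, since $\sum_N\Delta t_N^{-\gamma p}\,\mathbb{E}\sup_{t_n}\|W_A(t_n)-W_A^n\|^p\le C\sum_N\Delta t_N^{(\gamma^\ast-\gamma)p}<\infty$ over $\Delta t_N=T/N$ once $p$ is large.

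The remaining nonlinear coupling I would bound with the elementary discrete smoothing estimate $\|A^{1/2}E^m\|\le Ct_m^{-1/2}$ and Lemma~\ref{lem:lip}, giving $C\,\Delta t\sum_{k=1}^nt_{n-k+1}^{-1/2}(\|u(t_k)\|_1^2+\|u^k\|_1^2)\,\|e^k\|$. Setting $R:=\sup_{s\in[0,T]}\|u(s)\|_1^2+\sup_l\|u^l\|_1^2$ and collecting the four pieces, I arrive, almost surely, at $\|e^n\|\le K_1\Delta t^{\gamma}+CR\,\Delta t\sum_{k=1}^nt_{n-k+1}^{-1/2}\|e^k\|$. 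Since the kernel is weakly singular with $\Delta t\sum_mt_m^{-1/2}\le 2T^{1/2}$, a generalized (Henry-type) discrete Gronwall inequality closes the estimate and yields $\sup_n\|e^n\|\le K\Delta t^{\gamma}$.

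The main obstacle is the coefficient $R$: the generalized Gronwall constant depends exponentially on $R$ (through a Mittag--Leffler factor), so to produce one random $K$ valid for all $\Delta t\le\Delta t_0$ I must bound $R$ uniformly in $\Delta t$ along the admissible sequence. The continuous part $\sup_s\|u(s)\|_1^2$ is almost surely finite by Proposition~\ref{prop:eb}, but Proposition~\ref{prop:ul} only controls $\sup_l\|u^l\|_1^2$ in expectation, and a uniform moment bound alone does not give a uniform pathwise bound. To close this gap I would run the energy identity \eqref{eq:1norm} pathwise: its only noise-dependent ingredients are $\sum_j\|\Delta W^j\|_1^2$ and the martingale $\sum_j\langle\Delta W^j,u^{j-1}\rangle_1$, and both converge almost surely along $\Delta t_N=T/N$---the first to $T\|A^{1/2}Q^{1/2}\|_{\HS}^2$, the second to the stochastic integral $\int_0^{t}\langle u(s),\dd W(s)\rangle_1$, with a positive $L^2$-rate inherited from the Hölder regularity of $u$ that upgrades to almost sure convergence by Borel--Cantelli. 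Being convergent, these terms are bounded uniformly in $\Delta t$ for $\Delta t\le\Delta t_0(\omega)$; feeding this, together with \eqref{eq:iter1}, back into \eqref{eq:1norm} yields $\sup_{\Delta t\le\Delta t_0}\sup_l\|u^l\|_1<\infty$ almost surely, so $R$ is pathwise bounded and $K<\infty$ almost surely. This exponential dependence of $K$ on $R$ is exactly what prevents integrating the pathwise bound into a strong rate, consistent with the discussion in the introduction.
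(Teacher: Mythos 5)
Your proposal follows the same architecture as the paper's proof: the same splitting of $e^n=u(t_n)-u^n$ into an initial-data term, the stochastic-convolution error $W_A(t_n)-W_A^n$, consistency terms for the drift, and the nonlinear coupling $\Delta t\sum_k E^{n-k+1}(f(u(t_k))-f(u^k))$; the same ingredients for each piece (\eqref{eq:dete}, \eqref{eq:anal0}--\eqref{eq:anal}, Lemma~\ref{lem:lip}, Proposition~\ref{propo:ulip}, the smoothing bound $\|A^{1/2}E^m\|\le Ct_m^{-1/2}$); and the same closure via a generalized discrete Gronwall lemma with weakly singular kernel. The only cosmetic differences are in the stochastic convolution: the paper invokes Proposition~\ref{prop:wa} with $\beta=2$ (rate $\Delta t$, using the full assumption $\|A^{\frac12+\varepsilon}Q^{\frac12}\|_{\HS}<\infty$), while you take $\beta=2\gamma^\ast$ and make the Chebyshev/Borel--Cantelli upgrade to an almost sure bound explicit; both are fine, and your version of the upgrade is a detail the paper omits as ``standard.''

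The one substantive divergence is your treatment of the coefficient $R$. You are right that the Gronwall constant depends on $\sup_l\|u^l\|_1^2$, that this quantity depends on $\Delta t$, and that Proposition~\ref{prop:ul} controls it only in expectation, uniformly in $\Delta t$ --- which does not by itself yield a single almost surely finite bound valid for all $\Delta t=T/N$ simultaneously. The paper passes over this point (it simply asserts the existence of the random variable $L_1$, citing Propositions~\ref{prop:eb} and~\ref{prop:ul}), so identifying it is to your credit. However, your proposed repair is circular as stated: to show that the discrete martingale $\sum_j\langle\Delta W^j,u^{j-1}\rangle_1$ converges to $\int_0^t\langle u(s),\dd W(s)\rangle_1$ with an $L^2$-rate, you must control $\|u^{j-1}-u(s)\|_1$ for $s\in(t_{j-1},t_j]$, i.e.\ you need convergence of the scheme (in the $\dot H^1$ norm, no less) with a rate --- which is what the theorem is trying to establish. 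The H\"older regularity of $u$ from Proposition~\ref{propo:ulip} controls $\|u(s)-u(t_{j-1})\|$, not $\|u(t_{j-1})-u^{j-1}\|_1$. To make this step non-circular you would need an argument that bounds $\sup_N\sup_l|\sum_{j\le l}\langle\Delta W^j,u^{j-1}\rangle_1|$ without identifying the limit --- for instance an exponential martingale inequality combined with the pathwise control of the quadratic variation $\Delta t\sum_j\|\tilde Q^{1/2}A^{1/2}u^{j-1}\|^2$ that \eqref{eq:iter1}--\eqref{eq:1norm} themselves provide, or a higher-moment bound that decays in $N$. As written, this link in your chain would fail; everything else matches the paper's argument.
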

\begin{proof}
  Since the arguments are pathwise and hence basically deterministic,
  we omit standard details. Let $e^n=u(t_n)-u^n$ and $0\le \gamma
  <\frac12$. We decompose the error, using the mild formulation of
  \eqref{eq:be} and \eqref{eq:mild}, as follows
\begin{align*}
e^n&=(E(t_n)u_0-E^n u_0)
+(W_A(t_n)-W_A^n)\\
&\quad +\sum_{k=1}^n
\int_{t_{k-1}}^{t_k}
E(t_n-s)f(u(s))-E^{n-k+1}f(u^{k})\,\dd s
=:e^n_1+e^n_2+e^n_3.
\end{align*}
By \eqref{eq:dete} we may estimate $e_1$ as
$$
\|e^n_1\|\le C \Delta t^{\frac12}\|u_0\|_1.
$$
For $e_2^n$, by Proposition \ref{prop:wa} with $\beta=2$, we have that
$$
\|e^n_2\|\le L \Delta t\, \|A^{\frac12+\varepsilon} Q^{\frac12}\|_{\HS}
$$
almost surely for some finite nonnegative random variable $L$. Next, we can further decompose $e_3$ as
\begin{align*}
e^n_3&=\sum_{k=1}^n\int_{t_{k-1}}^{t_k}E^{n-k+1}(f(u(t_{k}))-f(u^{k}))\,\dd s\\
&\quad +\sum_{k=1}^n\int_{t_{k-1}}^{t_k}(E(t_{n-k+1})-E^{n-k+1})f(u(t_{k}))\,\dd s\\
&\quad +\sum_{k=1}^n\int_{t_{k-1}}^{t_k}E(t_{n-k+1})(f(u(s))-f(u(t_{k})))\,\dd s\\
&\quad +\sum_{k=1}^n\int_{t_{k-1}}^{t_k}(E(t_n-s)-E(t_{n-k+1}))f(u(s))\,\dd s
=:e_{31}^n+e_{32}^n+e_{33}^n+e_{34}^n.
\end{align*}
To bound $e_{31}^n$ we use Propositions \ref{prop:eb} and
\ref{prop:ul} together with Lemma \ref{lem:lip} to conclude that for
some finite nonnegative random variable $L_1$ we have, almost surely,
\begin{align*}
\|e_{31}^n\|
&=\left\|\sum_{k=1}^n\int_{t_{k-1}}^{t_k}A^{\frac{1}{2}}E^{n-k+1}A^{-\frac12}(f(u(t_{k}))-f(u^{k}))\,\dd s\right\|\\
&\le L_1\sum _{k=1}^n\int_{t_{k-1}}^{t_k} t_{k}^{-\frac 12} \|e^k\|\,\dd s =L_1 \Delta t\, \sum _{k=1}^n t_{k}^{-\frac 12} \|e^k\|,
\end{align*}
where we used the well known fact that $\|A^{1/2}E^k\|\le
Ct_k^{-\frac12}$ (see, for example, \cite[Lemma
7.3]{Thomeebook}). Next we use Proposition \ref{prop:eb}, Lemma
\ref{lem:lip}, and \eqref{eq:dete} with $\gamma=0$, $\rho=1$ and $\beta=2\gamma$ to estimate $e_{32}^n$ as
\begin{align*}
\|e_{32}^n\|
&=\left\|\sum_{k=1}^n\int_{t_{k-1}}^{t_k}A^{\frac12}(E(t_{n-k+1})-E^{n-k+1})A^{-\frac12}f(u(t_{k}))\,\dd s\right\|\\
&\le\Delta t^{\gamma} L_2 \sum_{k=1}^n\int_{t_{k-1}}^{t_k}t_k^{-\frac12-\gamma}\,\dd s=\Delta t^{\gamma} L_2 \Delta t\,\sum_{k=1}^n t_k^{-\frac12-\gamma},
\end{align*}
almost surely for some finite nonnegative random variable $L_2$. For $e_{33}^n$ we use the H\"older continuity of $u$ from Proposition \ref{propo:ulip} together with Proposition \ref{prop:eb}, Lemma \ref{lem:lip}, and \eqref{eq:anal0} with $\beta=\frac12$,
and obtain
\begin{align*}
\|e_{33}^n\|
&=\left\|\sum_{k=1}^n\int_{t_{k-1}}^{t_k}A^{\frac12}E(t_{n-k+1})A^{-\frac12}(f(u(s))-f(u(t_{k})))\,\dd s\right\|\\
&\le \Delta t^{\gamma} L_3\sum_{k=1}^n\int_{t_{k-1}}^{t_k}t_k^{-\frac12}\,\dd s=\Delta t^{\gamma} L_3 \Delta t\, \sum_{k=1}^nt_k^{-\frac12},
\end{align*}
almost surely for some finite nonnegative random variable $L_3$. Finally, by Proposition \ref{prop:eb}, Lemma \ref{lem:lip} and \eqref{eq:anal} with $\beta=\frac12$ and $\rho=0$, we have
\begin{align*}
\|e_{34}\|
&=\left\|\sum_{k=1}^n\int_{t_{k-1}}^{t_k}A^{\frac12}(E(t_n-s)-E(t_{n-k+1}))A^{-\frac12}f(u(s))\,\dd s\right\|\\
&\le \Delta t^{\gamma} L_4 \sum_{k=1}^n\int_{t_{k-1}}^{t_k}t_k^{-\frac12-\gamma}\,\dd s=\Delta t^{\gamma} L_4 \Delta t\,\sum_{k=1}^n t_k^{-\frac12-\gamma},
\end{align*}
almost surely for some finite nonnegative random variable $L_2$. Putting together the estimates and using  a generalized discrete Gronwall lemma \cite[Lemma 7.1]{Stig} finishes the proof.
\end{proof}

Finally, we show strong convergence in $L^p$, albeit without rate.

\begin{thm}\label{thm:strc}
  Let $\varepsilon>0$, $p\ge 1$, and $N\Delta t=T$. If
  $\|A^{\frac12+\varepsilon} Q^{\frac12}\|_{\HS}<\infty$ and
  $\mathbb{E}\|u_0\|_1^{2p}<\infty$, then
\begin{align*}
\lim_{\Delta t\to 0}\mathbb{E}\sup_{t_{n}\in [0,T]}\|u(t_n)-u^n\|^p= 0,\quad t_n=n\Delta t.
\end{align*}
\end{thm}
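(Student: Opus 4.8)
The plan is to \emph{not} integrate the pathwise bound of Theorem~\ref{thm:pwc} directly: its random constant $K$ is only known to be almost surely finite, with no control on its moments, so $\mathbb{E}(K\Delta t^\gamma)^p$ need not even be finite. Instead I would combine the almost sure convergence furnished by Theorem~\ref{thm:pwc} with a uniform integrability argument. Concretely, set $Y_{\Delta t}:=\sup_{t_n\in[0,T]}\|u(t_n)-u^n\|$ and aim to show that $Y_{\Delta t}^p\to 0$ almost surely and that the family $\{Y_{\Delta t}^p\}$ is uniformly integrable; Vitali's convergence theorem (equivalently, almost sure convergence together with uniform integrability implies $L^1$-convergence) then gives $\mathbb{E}Y_{\Delta t}^p\to 0$, which is the assertion.

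For the almost sure convergence, fix any $\gamma\in(0,\frac12)$. Since $p\ge1$ gives $\mathbb{E}\|u_0\|_1^2\le(\mathbb{E}\|u_0\|_1^{2p})^{1/p}<\infty$ by Jensen's inequality, and $\|A^{\frac12+\varepsilon}Q^{\frac12}\|_{\HS}<\infty$ by hypothesis, Theorem~\ref{thm:pwc} applies and yields finite random variables $K\ge0$ and $\Delta t_0>0$ with $Y_{\Delta t}\le K\,\Delta t^\gamma$ for all $\Delta t\le\Delta t_0$, almost surely. Letting $\Delta t\to0$ along any sequence, the right-hand side tends to $0$ almost surely, hence so does $Y_{\Delta t}^p$.

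The uniform integrability is where the main work lies, and it is precisely here that the hypothesis $\mathbb{E}\|u_0\|_1^{2p}<\infty$ (rather than merely $\mathbb{E}\|u_0\|_1^{p}$) is used. I would bound the $2p$-th moment uniformly in $\Delta t$: by the triangle inequality and $\|\cdot\|\le C\|\cdot\|_1$,
\begin{equation*}
Y_{\Delta t}^{2p}\le C\Big(\sup_{t\in[0,T]}\|u(t)\|^{2p}+\sup_{l\in I_N}\|u^l\|^{2p}\Big).
\end{equation*}
Taking expectations, the first term is controlled by Proposition~\ref{prop:eb} applied with exponent $2p\ge2$, and the second by Proposition~\ref{prop:ulp} applied with exponent $2p$ (for $p=1$, Proposition~\ref{prop:ul} already suffices); note that $\|A^{\frac12}Q^{\frac12}\|_{\HS}\le C\|A^{\frac12+\varepsilon}Q^{\frac12}\|_{\HS}<\infty$ by \eqref{eq:hs}, so both propositions are applicable. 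The condition $T^{2p-1}\Delta t\le\frac12$ required by Proposition~\ref{prop:ulp} holds for all $\Delta t\le\Delta t_1:=(2T^{2p-1})^{-1}$, and the resulting constant $C(T,2p,u_0)$ is independent of $\Delta t$. Consequently $\sup_{\Delta t\le\Delta t_1}\mathbb{E}\,Y_{\Delta t}^{2p}<\infty$.

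This uniform $L^2$-bound on $Y_{\Delta t}^p$ makes $\{Y_{\Delta t}^p\}_{\Delta t\le\Delta t_1}$ uniformly integrable, and combined with $Y_{\Delta t}^p\to0$ almost surely it yields $\mathbb{E}Y_{\Delta t}^p\to0$, completing the proof. The step I expect to be the crux is the uniform-in-$\Delta t$ moment bound: it is exactly the (exponential-in-$T$) a priori estimate of Proposition~\ref{prop:ulp}, matched to the initial-data assumption $\mathbb{E}\|u_0\|_1^{2p}<\infty$, that upgrades the pathwise statement to an $L^p$ statement. Since uniform integrability is a purely qualitative device that discards the quantitative information carried by the random constant $K$, this route inherently cannot produce a convergence rate, in agreement with the statement.
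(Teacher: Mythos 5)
Your proposal is correct and follows essentially the same route as the paper: almost sure convergence from Theorem~\ref{thm:pwc} combined with uniform integrability of $\{Y_{\Delta t}^p\}$, the latter obtained from the uniform second-moment bound $\mathbb{E}Y_{\Delta t}^{2p}\le M$ via Propositions~\ref{prop:eb} and~\ref{prop:ulp} under the condition $T^{2p-1}\Delta t\le\frac12$. The only cosmetic difference is that the paper passes through convergence in probability rather than invoking Vitali directly from almost sure convergence, and it leaves implicit the Jensen and $\|A^{\frac12}Q^{\frac12}\|_{\HS}\le C\|A^{\frac12+\varepsilon}Q^{\frac12}\|_{\HS}$ reductions that you spell out.
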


\begin{proof}
Let
\begin{align*}
Y_N:=\sup_{t_{n}\in [0,T]}\|u(t_n)-u^n\|^p.
\end{align*}
By Theorem \ref{thm:pwc} it follows that $Y_N\to 0$ almost surely, and
hence in probability, as $N\to \infty$.  By Propositions
\ref{prop:eb} and \ref{prop:ulp}, there is $M>0$ such that, for
$T^{2p-1}\Delta t\le \frac12$,
\begin{align*}
\mathbb{E}\, Y_N^{2}\le C\mathbb{E}\sup_{t_{n}\in [0,T]}\left(\|u(t_n)\|^{2p}+\|u^n\|^{2p}\right)\le M.
\end{align*}
Therefore, it follows that $\{Y_N\}_{N\in \mathbb{N}}$ is uniformly
integrable. Being convergent in probability and uniformly integrable,
it converges in $L^1$; that is,
\begin{align*}
\lim_{\Delta t\to 0}\mathbb{E}\sup_{t_{n}\in [0,T]}\|u(t_n)-u^n\|^p=\lim_{N\to \infty}\mathbb{E}\,Y_N= 0,
\end{align*}
see \cite[Proposition 3.12]{Kal}.
\end{proof}


\def\cprime{$'$}
\providecommand{\bysame}{\leavevmode\hbox to3em{\hrulefill}\thinspace}
\providecommand{\MR}{\relax\ifhmode\unskip\space\fi MR }
\providecommand{\MRhref}[2]{%
  \href{http://www.ams.org/mathscinet-getitem?mr=#1}{#2}
}
\providecommand{\href}[2]{#2}

\end{document}